\newcommand{\Z}{{\mathbb Z}}
\newcommand{\C}{{\mathbb C}}
\newcommand{\N}{{\mathbb N}}
\newcommand{\E}{{\mathcal E}}
\def\<{\langle}
\def\>{\rangle}
\def\Aut{\mathrm{Aut}}
\def\End{\mathrm{End}}
\def\Hom{\mathrm{Hom}}
\newcommand{\la}{\langle}
\newcommand{\ra}{\rangle}
\newtheorem{thm}{Theorem}[section]
\newtheorem{prop}[thm]{Proposition}
\newtheorem{lem}[thm]{Lemma}
\newtheorem{rmk}[thm]{Remark}
\newtheorem{definition}[thm]{Definition}
\begin{document}

\begin{center}
{\Large \bf   $q$-Virasoro algebra and vertex algebras}
\end{center}

\begin{center}
{Hongyan Guo$^{a}$, Haisheng Li$^{b}$\footnote{Partially supported by NSA grant
H98230-11-1-0161 and China NSF grant (No. 11128103)}, Shaobin Tan$^{a}$\footnote{Partially supported by
China NSF grant (No.10931006) and a grant from the PhD Programs
Foundation of Ministry of Education of China (No.20100121110014).}
and Qing Wang$^{a}$\footnote{Partially supported by NSF of China (No.11371024),
Natural Science Foundation of Fujian Province (No.2013J01018) and
Fundamental Research Funds for the Central
University (No.2013121001).}\\
$\mbox{}^{a}$School of Mathematical Sciences, Xiamen University,
Xiamen 361005, China\\
$\mbox{}^{b}$ Department of Mathematical Sciences,\\
Rutgers University, Camden, NJ 08102, USA\\}
\end{center}

\begin{abstract}
In this paper, we study a certain deformation $D$ of the Virasoro algebra that was introduced and called $q$-Virasoro algebra by Nigro,
in the context of vertex algebras.
Among the main results, we prove that  for any complex number $\ell$, the category of restricted $D$-modules of level $\ell$
is canonically isomorphic to the category
of quasi modules for a certain vertex algebra of affine type.
We also prove that the category of restricted $D$-modules of level $\ell$ is canonically isomorphic to the category of
$\mathbb{Z}$-equivariant $\phi$-coordinated quasi modules for the same vertex algebra. In the process,
we introduce and employ a certain infinite dimensional Lie algebra which is defined in terms of generators and relations
and then identified explicitly with a subalgebra of  $\mathfrak{gl}_{\infty}$.
\end{abstract}

\section{Introduction}
\def\theequation{1.\arabic{equation}}
\setcounter{equation}{0}

Vertex algebras, as a new class of algebraic structures, have deep connections with classical algebras of various types,
including Lie algebras, associative algebras, and groups.
In particular, vertex algebras and their modules are often constructed and studied by using infinite-dimensional Lie algebras such as
affine Kac-Moody Lie algebras (including infinite-dimensional Heisenberg algebras) and the Virasoro algebras (cf. \cite{FZ}).
Infinite-dimensional Lie algebras of many other types, such as toroidal Lie algebras, quantum torus Lie algebras,
deformed Heisenberg Lie algebras, and Lie algebra $\mathfrak{gl}_{\infty}$,
 have also been canonically associated with vertex algebras or their likes (see \cite{BBS}, \cite{LTW}, \cite{Li1}, \cite{Li3}, \cite{JL}).

Note that in the association of vertex algebras and their modules to affine Lie algebras and the Virasoro algebra,
it is essential that their canonical generating functions
are mutually local in the sense that  for any two generating functions $a(x)$ and $b(x)$,
there is a nonnegative integer $k$ such that
$$(x_{1}-x_{2})^{k}a(x_{1})b(x_{2})=(x_{1}-x_{2})^{k}b(x_{2})a(x_{1}).$$
Behind this association is a conceptual result which was obtained in \cite{Li7},
stating that for any vector space $W$, every local subset of $\Hom (W,W((x)))$
generates a vertex algebra with $W$ as a module.

As for quantum torus Lie algebras, the situation is different;  their
 generating functions  are {\em not} local, instead they
 are  {\em quasi local} in the sense that for generating functions $a(x),b(x)$,
 there exists a nonzero polynomial $p(x_{1},x_{2})$ such that
 $$p(x_{1},x_{2})a(x_{1})b(x_{2})=p(x_{1},x_{2})b(x_{2})a(x_{1}).$$
 To associate vertex algebras to Lie algebras like quantum torus Lie algebras,
 a new conceptual construction of vertex algebras was established and
 a theory of quasi modules for vertex algebras was developed in \cite{Li1}.
 It was proved that for any vector space $W$, every quasi local subset of $\mbox{Hom}(W , W((x)))$ generates
 in a certain natural way a
vertex algebra with $W$ as a quasi module.
 For a vertex algebra $V$, the main feature for a quasi module is a weaker Jacobi identity axiom stating that for $u,v\in V$,
 there is a nonzero polynomial $p(x_{1},x_{2})$ such that the usual Jacobi identity multiplied by $p(x_{1},x_{2})$ holds.

 The theory of quasi modules was developed further in \cite{Li1} and \cite{Li2}, in which a notion of $\Gamma$-vertex algebra
 with $\Gamma$ a group  and a notion of quasi module for a $\Gamma$-vertex algebra
 were introduced.
  For a group $\Gamma$, a {\em $\Gamma$-vertex algebra} is simply a vertex algebra $V$
 equipped with a group representation $R: g\mapsto R_{g}$ of $\Gamma$ on $V$ and a linear character $\chi$ of $\Gamma$ such that
 \begin{eqnarray*}
 R_{g}({\bf 1})={\bf 1}, \  \  \  R_{g}Y(v,x)R_{g}^{-1}=Y(R_{g}v,\chi(g)^{-1}x),
\end{eqnarray*}
 for $g\in \Gamma,\  v\in V$.
 Furthermore, for a $\Gamma$-vertex algebra $V$, {\em a quasi $V$-module} is a quasi module $(W,Y_{W})$
 for $V$ viewed as a vertex algebra, satisfying the conditions that
 $$Y_{W}(R_{g}v,x)=Y_{W}(v,\chi(g)x)\   \  \  \mbox{ for }g\in \Gamma,\  v\in V$$
 and that for $u,v\in V$, there exist $g_{1},\dots,g_{r}\in \Gamma$ such that
 $$\left(\prod_{i=1}^{r}\left(x_{1}-\chi(g_{i})x_{2}\right)\right)[Y_{W}(u,x_{1}),Y_{W}(v,x_{2})]=0.$$

In a program to associate quantum vertex algebras to various quantum
algebras including quantum affine algebras, a theory of (weak) quantum vertex algebras and
their modules was developed in \cite{Li4}, \cite{Li5}, \cite{Li6}.
In particular in \cite{Li5}, for a weak quantum vertex algebra $V$ with an automorphism group $G$,
a notion of $G$-equivariant $\phi$-coordinated quasi module was introduced.
Quantum vertex algebras in this sense are generalizations of vertex algebras and vertex superalgebras,
so that all the results directly apply to vertex algebras and vertex superalgebras.

In this paper, we study in the context of vertex algebras a certain $q$-deformation of the Virasoro algebra,
which was introduced by Nigro (see \cite{N}).  The $q$-Virasoro algebra, introduced by Nigro and denoted by $D$ in this paper,
 by definition is the Lie algebra with generators ${\bf c}$ and $D^{\alpha}(n)$  with $\alpha,n\in\mathbb{Z}$,
subject to relations $D^{-\alpha}(n)=-D^{\alpha}(n)$ and
\begin{eqnarray}
 [D^{\alpha}(n),D^{\beta}(m)] & = &
    (q-q^{-1})[\alpha m-\beta n]_{q}D^{\alpha+\beta}(m+n)
                                                    \nonumber\\
    &&{} -(q-q^{-1})[\alpha m+\beta n]_{q}D^{\alpha-\beta}(m+n)
                                                               \nonumber\\
    &&{} +([m]_{q^{{\alpha}+{\beta}}}-[m]_{q^{{\alpha}-{\beta}}})\delta_{m+n,0}{\bf c}\label{eq:1.1}
\end{eqnarray}
for $\alpha,\beta,m,n\in \mathbb{Z},$
where ${\bf c}$ is a central element, $q$ is a complex parameter which is neither zero nor a root of unity, and
$[n]_{q} = \frac{q^{-n}-q^{n}}{q^{-1}-q} $. Note that $[n]_{1}=n$ by convention.
As our main results, we establish a canonical connection between  this $q$-Virasoro algebra
and $\Gamma$-vertex algebras  and their quasi modules in the sense of \cite{Li1}, \cite{Li2} and \cite{Li3}, and we also
associate $q$-Virasoro algebra with vertex algebras in terms of $\mathbb{Z}$-equivariant
$\phi$-coordinated quasi modules in the sense of \cite{Li5}.

 We now describe the content of this paper with some technical details.
First, just as with affine Lie algebras, for each $\alpha\in \Z$ we form a generating function
$$D^{\alpha}(x) = \sum_{n\in\Z}D^{\alpha}(n)x^{-n-1}.$$
A $D$-module $W$ is said to be {\em restricted}  if for any $\alpha\in \Z,\ w\in W$, $D^{\alpha}(n)w=0$
for $n$ sufficiently large, and to be {\em of level} $\ell\in \C$ if the central
element ${\bf c}$ acts on $W$ as scalar $\ell$.
Writing the commutation relation (\ref{eq:1.1}) in terms of generating functions, one sees that
$D^{\alpha}(x)$ with $\alpha\in \Z$ form
a quasi local subset $U_{W}$ of $\E(W)$ for any restricted $D$-module $W$ of level $\ell$.
In view of  \cite{Li1}, $U_{W}$ generates a vertex algebra with $W$ as a quasi module.
This gives a {\em conceptual} association of vertex algebras to Lie algebra $D$.
The core of this paper is {\em explicitly} to construct the desired vertex algebras  and then associate them to the Lie algebra $D$.

Second, as a key step we modify the generating functions $D^{\alpha}(x)$ for $\alpha\in \Z$ to introduce
\begin{eqnarray}
\widetilde{D}^{\alpha}(x)=\begin{cases}D^{\alpha}(x)&\ \ \mbox{ if }\alpha=0\\
D^{\alpha}(x)-\frac{1}{q^{-\alpha}-q^{\alpha}}{\bf c}x^{-1}&\ \ \mbox{ if }\alpha\ne 0.
\end{cases}
\end{eqnarray}
The main reason for doing this is that commutation relation for $\widetilde{D}^{\alpha}(x)$ takes a better shape.
Let $W$ be an arbitrary restricted $D$-module of level $\ell$.
Set $\widetilde{U}_{W}=\{ \widetilde{D}^{\alpha}(q^{r}x)\  | \ \alpha,r\in \Z\}$.
This set is still quasi local, so it generates a vertex algebra $\la\widetilde{U}_{W}\ra$
with $W$ as faithful quasi module. A detailed study on this vertex algebra naturally leads us to
a  ``new''  infinite-dimensional Lie algebra.

Third, we introduce a Lie algebra denoted by ${\mathfrak{D}}$, which by
 definition is the Lie algebra  generated by
$d^{\alpha,r}$ for $\alpha,r\in\mathbb{Z}$,
subject to  relations: $d^{-\alpha,r}=-d^{\alpha,r}$ and
\begin{eqnarray*}
 \quad\qquad\qquad [d^{\alpha,r},d^{\beta,s}] &=
   & \delta_{\alpha+\beta,s-r}d^{\alpha+\beta,-\alpha+s}
   -\delta_{\alpha+\beta,r-s}d^{\alpha+\beta,\alpha+s}
                         \nonumber\\
   &&{} -\delta_{\alpha-\beta,s-r}d^{\alpha-\beta,-\alpha+s}
   +\delta_{\alpha-\beta,r-s}d^{\alpha-\beta,\alpha+s}.
\end{eqnarray*}
 On ${\mathfrak{D}}$,
there is a non-degenerate symmetric invariant bilinear form $\la\cdot,\cdot\ra$ defined by
\begin{eqnarray*}
\langle d^{\alpha,r},d^{\beta,s}\rangle =\delta_{r,s}\left(\delta_{\alpha-\beta,0}-\delta_{\alpha+\beta,0}\right)
\   \   \   \mbox{ for } \alpha,\beta,r, s\in\mathbb{Z}.
\end{eqnarray*}
Then we have an affine Lie algebra
 $\widehat{\mathfrak{D}}={\mathfrak{D}}\otimes\mathbb{C}[t,t^{-1}]\oplus \C {\bf c}$.
 Furthermore, for any complex number $\ell$, we have a vertex algebra $V_{\widehat{\mathfrak{D}}}(\ell,0)$,
 whose underlying vector space is the level $\ell$ generalized Verma module (or Weyl module) of $\widehat{\mathfrak{D}}$.

 Vertex algebra $V_{\widehat{\mathfrak{D}}}(\ell,0)$ actually is a $\Gamma$-vertex algebra.
 For $m\in \Z$, there is an automorphism $\sigma_{m}$ of $\widehat{\mathfrak{D}}$, uniquely determined by
  $$\sigma_{m}(d^{\alpha,r})=d^{\alpha,r+m}\  \  \  \mbox{ for }\alpha,r,m\in \Z.$$
 This automorphism of $\widehat{\mathfrak{D}}$ gives rise to
 an automorphism of $V_{\widehat{\mathfrak{D}}}(\ell,0)$, which is also denoted by $\sigma_{m}$.
 Then we have an automorphism group $\Gamma=\{ \sigma_{m} \ |\  m\in \Z\}$ of $V_{\widehat{\mathfrak{D}}}(\ell,0)$,
 which is naturally isomorphic to $\Z$.
From \cite{Li1}, $V_{\widehat{\mathfrak{D}}}(\ell,0)$ has a natural $\Gamma$-vertex algebra structure with the
linear character
 $\chi: \Gamma\rightarrow \C^{\times};\   \   \sigma_{m}\mapsto q^{m}\  \  (m\in \Z)$.
We prove that  on a vector space $W$,
a restricted $D$-module structure of level $\ell$
 is equivalent to a  quasi $V_{\widehat{\mathfrak{D}}}(\ell,0)$-module structure.
In this way, we obtain an equivalence of categories.

Another interesting way to modify the generating functions $D^{\alpha}(x)$ for $\alpha \in \Z$ is to set
$$\hat{D}^{\alpha}(x)=x\widetilde{D}^{\alpha}(x) = \sum\limits_{n\in\mathbb{Z}}\widetilde{D}^{\alpha}(n)x^{-n}.$$
By using \cite{Li4}, we prove that  for any  restricted $D$-module $W$ of level $\ell$,
there is a structure of a $\mathbb{Z}$-equivariant $\phi$-coordinated quasi module
for the vertex algebra  $V_{\widehat{\mathfrak{D}}}(\ell,0)$, which is uniquely determined by
$$Y_{W}(d^{\alpha,r},x)=\hat{D}^{\alpha}(q^{r}x)\  \  \   \mbox{ for }\alpha,r\in \Z.$$
(Note that $\mathbb{Z}$ is
considered as an automorphism group of $V_{\widehat{\mathfrak{D}}}(\ell,0)$ as  above.)
On the other hand, we show that any $\mathbb{Z}$-equivariant $\phi$-coordinated quasi module
for the vertex algebra  $V_{\widehat{\mathfrak{D}}}(\ell,0)$ is naturally a restricted $D$-module of level $\ell$.

This paper is organized as follows: In Section 2,
 we study $q$-Virasoro algebra $D$ in the context of $\Gamma$-vertex algebras
and their quasi modules.
In Section 3, we study the $q$-Virasoro algebra $D$ in terms of vertex algebras and
$\Gamma$-equivariant $\phi$-coordinated quasi modules.

\section{Associating $q$-Virasoro algebra with $\Gamma$-vertex algebras in terms of quasi modules}
\label{Sect:V(k,0)}
\def\theequation{2.\arabic{equation}}
\setcounter{equation}{0}

In this section,  we recall from \cite{Li1} and \cite{Li3} some basic results on quasi modules for
vertex algebras and from \cite{N} the $q$-Virasoro algebra $D$. We define a Lie algebra ${\mathfrak{D}}$ with a symmetric invariant bilinear form
and show that the vertex algebra $V_{\widehat{\mathfrak{D}}}(\ell,0)$ associated to the affine Lie algebra
$\widehat{\mathfrak{D}}$ of level $\ell$ is a $\Gamma$-vertex algebra with $\Gamma=\Z$.
Then we give an isomorphism between the category of restricted $D$-modules of level $\ell$
and that of quasi $V_{\widehat{\mathfrak{D}}}(\ell,0)$-modules.

Throughout this paper, $\N$ denotes the set of nonnegative integers,
 $\mathbb{C}^{\times}$ denotes the multiplicative group of nonzero complex numbers (while $\C$ denotes the complex number field), and
the symbols $x,y,x_{1},x_{2},\dots $ denote mutually commuting independent formal variables. All vector spaces in this paper are
considered to be over  $\mathbb{C}$. For a vector space $U$, $U((x))$ is the vector space of lower
truncated integral power series in $x$ with coefficients
 in $U$, $U[[x]]$ is the vector space of nonnegative integral
 power series in $x$ with coefficients in $U$, and
$U[[x,x^{-1}]]$ is the vector space of doubly infinite integral
 power series in $x$ with coefficients in $U$ .



We first recall the definition of a vertex algebra (cf. \cite{LL}).

\begin{definition}
{\em A {\em vertex algebra} is a vector space $V$ equipped with a linear map
             $$Y(\cdot,x) :V\longrightarrow \mathrm{Hom}(V,V((x)))\subset \mathrm{EndV}[[x,x^{-1}]]$$
              $$ v\longmapsto Y(v,x)=\sum_{n\in\mathbb{Z}}v_{n}x^{-n-1}\ \ (\mbox{where }v_{n}\in \End V)$$
              and with a distinguished vector $\textbf{1}\in V$, called the {\em vacuum vector},
              such that all the following conditions are satisfied for $u,v\in V$:
              $$Y(\textbf{1},x)v=v\;\;\mbox{(the {\em vacuum property})},$$
              $$Y(v,x)\textbf{1}\in V[[x]] \;\;\mbox{and}\;\; \lim_{x\mapsto 0}Y(v,x)\textbf{1} = v \;\;\mbox{(the {\em creation property})},$$
              and
              \begin{eqnarray*}
              &&x_{0}^{-1}\delta\left(\frac{x_{1}-x_{2}}{x_{0}}\right)Y(u,x_{1})Y(v,x_{2}) -
                         x_{0}^{-1}\delta\left(\frac{x_{2}-x_{1}}{-x_{0}}\right)Y(v,x_{2})Y(u,x_{1})\\
               &&\hspace{2cm}= x_{2}^{-1}\delta\left(\frac{x_{1}-x_{0}}{x_{2}}\right)Y(Y(u,x_{0})v,x_{2})
              \end{eqnarray*}
              (the {\em Jacobi identity}).}
\end{definition}

The following notion of quasi module for vertex algebras was introduced in \cite{Li1}:

\begin{definition}
{\em Let $V$ be a vertex algebra. A {\em quasi $V$-module} is a vector space
                  $W$ equipped with a linear map $Y_{W}(\cdot,x)$ from $V$ to $\mathrm{Hom}(W,W((x)))$ such that
                   $$ Y_{W}(1,x) = 1_{W}\ \ (\mbox{the identity operator on }W)$$
                    and  such that for any $u,v\in V$, there exists a nonzero polynomial $f(x_{1},x_{2})$
                    such that
                    \begin{eqnarray}
                    &&{}x_{0}^{-1}\delta\left(\frac{x_{1}-x_{2}}{x_{0}}\right)f(x_{1},x_{2})Y_{W}(u,x_{1})Y_{W}(v,x_{2})
                    \nonumber\\
                    &&\hspace{1cm} -x_{0}^{-1}\delta\left(\frac{x_{2}-x_{1}}{-x_{0}}\right)f(x_{1},x_{2})Y_{W}(v,x_{2})Y_{W}(u,x_{1})
                                                               \nonumber\\
                   &= & x_{2}^{-1}\delta\left(\frac{x_{1}-x_{0}}{x_{2}}\right)f(x_{1},x_{2})Y_{W}(Y(u,x_{0})v,x_{2}). \nonumber
\end{eqnarray}}
\end{definition}

We next recall from \cite{Li1} and \cite{Li2} a conceptual construction of vertex algebras and
their quasi modules. Let $W$ be a general vector space. Set
\begin{eqnarray}
\mathcal{E}(W)=\mbox{Hom}(W , W((x)))\subset(\mbox{EndW})[[x,x^{-1}]].     \label{eq:2.1}
\end{eqnarray}
The identity operator on $W$, denoted by $\textbf{1}_{W}$, is a special
element of $\mathcal{E}(W).$

The following generalization of the notion of locality was introduced in \cite{Li1}:

\begin{definition} {\em Formal series $a(x),b(x)\in\mathcal{E}(W)$ are said to be
                 {\em mutually quasi local} if there exists a nonzero polynomial $f(x_{1},x_{2})$ such that
                 \begin{eqnarray}
                 f(x_{1},x_{2})a(x_{1})b(x_{2})=f(x_{1},x_{2})b(x_{2})a(x_{1}).  \label{eq:2.2}
                 \end{eqnarray}
                 A subset (subspace) $U$ of $\mathcal{E}(W)$ is said to be {\em quasi local} if any
                 $a(x),b(x)\in U$ are mutually quasi local.}
\end{definition}

The following notion was due to \cite{G-K-K}:

  \begin{definition}
  {\em Let $\Gamma$ be a subgroup of $\mathbb{C}^{\times}$. Formal series $a(x),b(x)\in\mathcal{E}(W)$ are said to be
                 {\em mutually $\Gamma$-local} if there exists a (nonzero) polynomial
                 $$f(x_{1},x_{2})\in\langle(x_{1}-\alpha x_{2})\ | \ \alpha\in\Gamma\rangle\subset\mathbb{C}[x_{1},x_{2}]$$
                 such that (\ref{eq:2.2}) holds.  Furthermore, the notion of $\Gamma$-local subset (space) is defined in the obvious way.}
\end{definition}

Denote by $\C(x_{1},x_{2})$ the field of rational functions. Let
$$\iota_{x_{1},x_{2}}: \  \C(x_{1},x_{2})\rightarrow \C((x_{1}))((x_{2}))$$
be the canonical extension of the ring embedding of $\C[x_{1},x_{2}]$ into the field $\C((x_{1}))((x_{2}))$.
In particular, for $\alpha\in \C,\ m\in \Z$ we have
$$\iota_{x_{1},x_{2}}\left((x_{1}-\alpha x_{2})^{m}\right)=\sum_{j\ge 0}\binom{m}{j}(-\alpha)^{j}x_{1}^{m-j}x_{2}^{j}.$$

Let $W$ be a vector space as before. Let $U$ be any quasi local subset of $\mathcal{E}(W)$ and let $a(x),b(x)\in U$.
Notice that the relation (\ref{eq:2.2}) implies
 \begin{eqnarray}
 f(x_{1},x_{2})a(x_{1})b(x_{2})\in \mbox{Hom}(W, W((x_{1},x_{2}))).     \label{eq:2.3}
 \end{eqnarray}
Let $\alpha\in \mathbb{C}^{\times}$. Define $a(x)_{(\alpha,n)}b(x)\in\mbox{(End W)}[[x,x^{-1}]]$ for $n\in \Z$
                  in terms of generating function
                 \begin{eqnarray}\mathcal{Y}_{\alpha}(a(x),x_{0})b(x)=
                  \sum_{n\in\mathbb{Z}}(a(x)_{(\alpha,n)}b(x))x_{0}^{-n-1}   \label{eq:2.4}\end{eqnarray}
                  by
                 \begin{eqnarray}\mathcal{Y}_{\alpha}(a(x),x_{0})b(x) =
                  l_{x,x_{0}}(f(x_{0}+\alpha x,x)^{-1})(f(x_{1},x)a(x_{1})b(x))|_{x_{1}=\alpha x+x_{0}},  \label{eq:2.5}\end{eqnarray}
                 where $f(x_{1},x_{2})$ is any nonzero polynomial such that (\ref{eq:2.3}) holds.

Let $\Gamma$ be a subgroup of $\mathbb{C}^{\times}$.
A quasi local subspace $U$ of $\mathcal{E}(W)$ is said to be {\em $\mathcal{Y}_{\Gamma}$-closed} if
                 \begin{eqnarray} a(x)_{(\alpha,n)}b(x)\in U             \label{eq:2.6}\end{eqnarray}
                  for all $a(x),b(x)\in U,\  \alpha\in\Gamma,\  n\in\mathbb{Z}$.
                  In the case $\Gamma=\{1\}$, we say $U$ is {\em $\mathcal{Y}_{1}$-closed} and we particularly set
                   \begin{eqnarray}
                   \mathcal{Y}(a(x),x_{0})b(x)&=&\mathcal{Y}_{1}(a(x),x_{0})b(x),\\
                 a(x)_{(n)}b(x)&=&a(x)_{(1,n)}b(x).
                 \end{eqnarray}

Let $S$ be any quasi local subset of $\mathcal{E}(W)$ and let $\Gamma$ be any subgroup  of $\mathbb{C}^{\times}$.
Denote by $\langle S\rangle_{\Gamma}$ the smallest $\mathcal{Y}_{\Gamma}$-$closed$
quasi local subspace of $\mathcal{E}(W)$, which contains $S$ and $\textbf{1}_{W}$.
From \cite{Li1} (Proposition 4.10),
$\langle S\rangle_{\Gamma}$ is linearly spanned by the vectors
$$a^{(1)}(x)_{(\alpha_{1},n_{1})}\cdots a^{(r)}(x)_{(\alpha_{r},n_{r})}\textbf{1}_{W}$$
for $r\in \N, \  a^{(i)}(x)\in S,\  \alpha_{i}\in\Gamma,\   n_{i}\in\mathbb{Z}$.
Moreover, we set $\langle S\rangle=\langle S\rangle_{\{1\}},$ the smallest $\mathcal{Y}_{1}$-closed quasi local subspace
of $\mathcal{E}(W)$,  which contains $S$ and $\textbf{1}_{W}$.

For any $\alpha\in \C^{\times}$ and $a(x)\in\mathcal{E}(W)$,
it is clear that $a(\alpha x)\in\mathcal{E}(W).$
Following \cite{Li1}, we define
$\overline{R}_{\alpha}\in \mbox{End}(\mathcal{E}(W))$ by
\begin{eqnarray}
 \overline{R}_{\alpha}(a(x) )= a(\alpha x)\ \ \  \mbox{ for }  a(x)\in\mathcal{E}(W).
 \end{eqnarray}
Then the map $\overline{R}$ : $\mathbb{C}^{\times}\rightarrow \mbox{End}(\mathcal{E}(W)),$
sending $\alpha$ to $ \overline{R}_{\alpha}$ for $\alpha\in \C^{\times}$, is a representation of $\mathbb{C}^{\times}$
on $\mathcal{E}(W).$

The following result was obtained in \cite{Li1} (Proposition 4.12 and Theorem 5.3):

\begin{thm}\label{thm.2.4}
Let $W$ be a vector space and let $\Gamma$ be a subgroup of $\mathbb{C}^{\times}$.
           Then for any quasi local subset $S$ of $\mathcal{E}(W)$,
           $(\langle S\rangle_{\Gamma},\mathcal{Y}_{1},1_{W})$ carries the structure
           of a vertex algebra and $W$ is a quasi module with $Y_{W}(a(x),z)=a(z)$ for $a(x)\in \langle S\rangle_{\Gamma}$.
           Furthermore, if the subspace spanned by $S$ is a $\Gamma$-local subspace and a $\Gamma$-submodule
           of $\mathcal{E}(W)$, then $\langle S\rangle=\langle S\rangle_{\Gamma}.$
\end{thm}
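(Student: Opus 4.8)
The plan is to verify directly that the triple $(\langle S\rangle_{\Gamma},\mathcal{Y}_{1},1_{W})$ satisfies the vertex algebra axioms and that the tautological assignment $Y_{W}(a(x),z)=a(z)$ makes $W$ a quasi module. Since $\langle S\rangle_{\Gamma}$ is by construction $\mathcal{Y}_{\Gamma}$-closed, it is in particular $\mathcal{Y}_{1}$-closed, so $\mathcal{Y}_{1}$ maps $\langle S\rangle_{\Gamma}$ back into itself and the internal vertex operation is well defined. The first preliminary I would establish is that for mutually quasi local $a(x),b(x)$ the product $\mathcal{Y}_{\alpha}(a(x),x_{0})b(x)$ of (\ref{eq:2.5}) is independent of the auxiliary polynomial $f$: if both $f$ and $g$ satisfy (\ref{eq:2.3}), then so does $fg$, and the two resulting expressions are seen to agree after clearing denominators, using that $\iota_{x_{1},x_{2}}$ is a ring homomorphism. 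The same computation shows $\mathcal{Y}_{\alpha}(a,x_{0})b\in\mathcal{E}(W)((x_{0}))$, so that $a(x)_{(\alpha,n)}b(x)=0$ for $n$ sufficiently large.

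Next I would dispatch the easy axioms. The vacuum $1_{W}$ is quasi local with every field via $f=1$, and the defining formula collapses to give $\mathcal{Y}_{1}(1_{W},x_{0})a(x)=a(x)$ together with $\mathcal{Y}_{1}(a(x),x_{0})1_{W}\in\langle S\rangle_{\Gamma}[[x_{0}]]$ having constant term $a(x)$; these are precisely the vacuum and creation properties. The spanning result of \cite{Li1} quoted just before the theorem guarantees that $\langle S\rangle_{\Gamma}$ is linearly spanned by the iterated products of elements of $S$, so it suffices to verify all remaining identities on generators and to propagate them through the products by induction, provided one also checks that a product of quasi local fields is again quasi local, so that the construction genuinely closes up.

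The main obstacle is the Jacobi identity. The route I would take is to first prove two structural identities for fields acting on $W$: a \emph{weak commutativity} statement, which is essentially the quasi locality (\ref{eq:2.2}) rewritten by means of a delta function, and a \emph{weak associativity} statement asserting that, after multiplication by a suitable common clearing polynomial, one has an equality of operators on $W$ relating the iterate $\mathcal{Y}_{1}(\mathcal{Y}_{1}(a,x_{0})b,x_{2})c$ to the product $\mathcal{Y}_{1}(a,x_{0}+x_{2})\mathcal{Y}_{1}(b,x_{2})c$. The first is immediate from the definition of quasi locality; the second is the genuinely technical step, since it requires the substitution $x_{1}=x_{2}+x_{0}$ to be justified inside $\mathrm{Hom}(W,W((x_{1},x_{2})))$ only after multiplying by $f$. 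Once weak commutativity and weak associativity are in hand, I would invoke the standard ``commutativity plus associativity implies Jacobi'' machinery: multiplying the Jacobi identity by an appropriate $f(x_{1},x_{2})$ removes the obstruction coming from the polynomial denominators, which is exactly why $W$ is only a \emph{quasi} module, whereas on $\langle S\rangle_{\Gamma}$ the identity holds on the nose because the internal operation is $\mathcal{Y}_{1}$ with $\alpha=1$. I expect the bookkeeping needed to track the polynomial $f$ through the associativity computation, and to confirm that all the products remain quasi local, to be the most delicate part.

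Finally, for the ``furthermore'' statement I would exploit the intertwining between the action $\overline{R}_{\alpha}$ and the products, namely an identity of the form $a(x)_{(\alpha,n)}b(x)=\alpha^{\,n+1}(\overline{R}_{\alpha}a(x))_{(n)}b(x)$, which expresses $\mathcal{Y}_{\alpha}$ through $\mathcal{Y}_{1}=\mathcal{Y}$ applied to $\overline{R}_{\alpha}a$ after rescaling $x_{0}$ by $\alpha$. The inclusion $\langle S\rangle\subseteq\langle S\rangle_{\Gamma}$ is automatic, since $\mathcal{Y}_{1}$ is the $\alpha=1$ case of $\mathcal{Y}_{\Gamma}$. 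For the reverse inclusion, the hypothesis that $\mathrm{span}(S)$ is a $\Gamma$-submodule under $\overline{R}$ yields $\overline{R}_{\alpha}a\in\langle S\rangle$ on generators, so the displayed identity shows that closure of $\langle S\rangle$ under $\mathcal{Y}_{1}$ already forces closure under every $\mathcal{Y}_{\alpha}$ with $\alpha\in\Gamma$. Hence $\langle S\rangle$ is $\mathcal{Y}_{\Gamma}$-closed and contains $S$ and $1_{W}$, giving $\langle S\rangle_{\Gamma}\subseteq\langle S\rangle$ and therefore equality. The $\Gamma$-locality hypothesis enters in guaranteeing that $\mathrm{span}(S)$ is quasi local with clearing polynomials drawn from $\langle x_{1}-\alpha x_{2}\mid\alpha\in\Gamma\rangle$, keeping the entire construction inside the $\Gamma$-framework.
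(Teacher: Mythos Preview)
The paper does not give its own proof of this theorem: it is quoted verbatim as a result from \cite{Li1} (Proposition~4.12 and Theorem~5.3), with no argument supplied. So there is nothing in the present paper to compare your proposal against.

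That said, your sketch is a faithful outline of how the proof in \cite{Li1} actually proceeds. The independence of $\mathcal{Y}_{\alpha}$ from the auxiliary polynomial, the verification of the vacuum and creation axioms, the derivation of Jacobi from weak commutativity plus weak associativity, and the intertwining identity
\[
a(x)_{(\alpha,n)}b(x)=\alpha^{\,n+1}\bigl(\overline{R}_{\alpha}a(x)\bigr)_{(n)}b(x)
\]
are all ingredients of the original argument. One point you leave implicit but which carries real weight in \cite{Li1} is the step showing that quasi locality is \emph{preserved} under the products $a(x)_{(\alpha,n)}b(x)$; this is what guarantees that $\langle S\rangle_{\Gamma}$ is genuinely a quasi local subspace and not merely formally closed. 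Likewise, the passage from ``weak Jacobi on $W$'' to ``honest Jacobi on $\langle S\rangle_{\Gamma}$'' uses that the fields in $\langle S\rangle_{\Gamma}$ act faithfully on $W$ together with a formal-variable cancellation argument, which you allude to but do not make explicit. These are the places where the detailed work lives, but your identification of them as the delicate points is accurate.
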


We now recall the definitions of a $\Gamma$-vertex algebra and a quasi module (see \cite{Li2}).

\begin{definition}\label{def.2.6}
{\em Let $\Gamma$ be an abstract group (which is not necessarily a subgroup of $\C^{\times}$).
A {\em $\Gamma$-vertex algebra} is a vertex algebra $V$ equipped with group homomorphisms
                $$ R: \Gamma \rightarrow {\mathrm{GL}}(V); \ g \longmapsto R_{g} \;\;\mbox{and}\;\;
                 \varphi: \Gamma \rightarrow \mathbb{C}^{\times}$$
                 such that $R_{g}(\textbf{1}) = \textbf{1}$ for $g\in\Gamma$
                 and
                 $$R_{g}Y(v,x)R_{g}^{-1} = Y(R_{g}(v), \varphi(g)^{-1}x) \;\;\mbox{for}\; g\in\Gamma, \  v\in V.$$}
\end{definition}

\begin{rmk} {\em This notion is equivalent to that of $\Gamma$-vertex algebra defined in \cite{Li1}. }
\end{rmk}

\begin{definition}
{\em Let $V$ be a $\Gamma$-vertex algebra. A {\em quasi $V$-module} is a quasi module $(W, Y_{W})$
                for $V$ viewed as a vertex algebra, satisfying the condition that
                $$Y_{W}(R_{g}(v),x) = Y_{W}(v, \varphi(g)x) \;\;\mbox{for}\; g\in\Gamma,\  v\in V$$
                and for $u,v\in V$, there exist $\alpha_{1},\ldots,
                \alpha_{k}\in\varphi(\Gamma)\subset\mathbb{C}^{\times}$ such that
                $$(x_{1}-\alpha_{1}x_{2})\cdots(x_{1}-\alpha_{k}x_{2})[Y_{W}(u,x_{1}), Y_{W}(v,x_{2})] = 0.$$}
\end{definition}

\begin{rmk}\label{rgradedva-gammava}
{\em Let $V$ be a {\em $\Z$-graded vertex algebra} in the sense that $V$ is a vertex algebra
equipped with a $\Z$-grading $V=\oplus_{n\in \Z}V_{(n)}$ such that ${\bf 1}\in V_{(0)}$ and
$$u_{m}V_{(n)}\subset V_{(k+n-m-1)}\ \ \  \mbox{ for }u\in V_{(k)},\ k,m,n\in \Z.$$
Denote by $L(0)$ the linear operator on $V$, defined by $L(0)|_{V_{(n)}}=n$ for $n\in \Z$.
Define an {\em automorphism of a $\Z$-graded vertex algebra $V$} to be an automorphism
of vertex algebra $V$, which  preserves the $\Z$-grading.
Let $\Gamma$ be an automorphism group of a $\Z$-graded vertex algebra $V$ and
let $\varphi: \Gamma \rightarrow \mathbb{C}^{\times}$ be a group homomorphism.
Then it is straightforward to show (cf. \cite{Li2}) that $V$ becomes a $\Gamma$-vertex algebra
with $R_{g}=\varphi(g)^{-L(0)}g$ for $g\in \Gamma$.}
\end{rmk}

As we shall need later, we next recall the $\Z$-graded vertex algebras associated to affine Lie algebras.
 Let ${\mathfrak{g}}$ be any (possibly infinite-dimensional) Lie algebra equipped with a symmetric invariant
bilinear form $\langle\cdot,\cdot\rangle$. To the pair $({\mathfrak{g}},\langle\cdot,\cdot\rangle)$
one has an affine Lie algebra
$$\hat{\mathfrak{g}}={\mathfrak{g}}\otimes \C[t,t^{-1}]\oplus \C {\bf c},$$
where ${\bf c}$ is central and
$$[a\otimes t^{m},b\otimes t^{n}]=[a,b]\otimes t^{m+n}+m\delta_{m+n,0}\langle a,b\rangle {\bf c}$$
for $a,b\in {\mathfrak{g}},\  m,n\in \Z$.
 Let $\ell$ be any complex number and  denote by $\C_{\ell}$ the one-dimensional $({\mathfrak{g}}\otimes \C[t]+\C {\bf c})$-module $\C$ with
 ${\mathfrak{g}}\otimes \C[t]$ acting trivially and with ${\bf c}$ acting as scalar $\ell$.
 Form an induced $\hat{\mathfrak{g}}$-module of level $\ell$
 $$V_{\hat{\mathfrak{g}}}(\ell,0)=U(\hat{\mathfrak{g}})\otimes_{U({\mathfrak{g}}\otimes \C[t]+\C {\bf c})} \C_{\ell}.$$
 Set ${\bf 1}=1\otimes 1$ and identify $a\in {\mathfrak{g}}$ with $a(-1){\bf 1}\in V_{\hat{\mathfrak{g}}}(\ell,0)$, making
 ${\mathfrak{g}}$ a subspace of $V_{\hat{\mathfrak{g}}}(\ell,0)$, where for $n\in \Z$, $a(n)$ denotes $a\otimes t^{n}$ alternatively.
 Then there exists a vertex algebra structure on $V_{\hat{\mathfrak{g}}}(\ell,0)$, uniquely determined by
 the conditions that ${\bf 1}$ is the vacuum vector and that
 $$Y(a,x)=a(x)=\sum_{n\in \Z}a(n)x^{-n-1}$$ for $a\in {\mathfrak{g}}$.
 Furthermore, $V_{\hat{\mathfrak{g}}}(\ell,0)$ is a $\Z$-graded vertex algebra  with
$$V_{\hat{\mathfrak{g}}}(\ell,0)_{(n)}=0\  \mbox{ for }n<0,\  V_{\hat{\mathfrak{g}}}(\ell,0)_{(0)}=\C {\bf 1},\  \
V_{\hat{\mathfrak{g}}}(\ell,0)_{(1)}={\mathfrak{g}},$$
where ${\mathfrak{g}}$ generates $V_{\hat{\mathfrak{g}}}(\ell,0)$ as a vertex algebra.

\begin{lem}\label{lautomorphism}
Let $\sigma$ be an automorphism of Lie algebra ${\mathfrak{g}}$, which preserves the bilinear form $\langle\cdot,\cdot\rangle$.
Then $\sigma$ extends uniquely to an automorphism
of the $\Z$-graded vertex algebra $V_{\hat{\mathfrak{g}}}(\ell,0)$.
\end{lem}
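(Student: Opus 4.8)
The plan is to build the extension in two stages: first lift $\sigma$ to an automorphism $\hat\sigma$ of the affine Lie algebra $\hat{\mathfrak{g}}$ and descend it to a grading-preserving linear isomorphism of $V_{\hat{\mathfrak{g}}}(\ell,0)$, and then promote that linear isomorphism to a vertex algebra automorphism by exploiting the fact that $\mathfrak{g}$ generates $V_{\hat{\mathfrak{g}}}(\ell,0)$. For the first stage I would define $\hat\sigma$ on $\hat{\mathfrak{g}}$ by $\hat\sigma(a\otimes t^n)=\sigma(a)\otimes t^n$ and $\hat\sigma({\bf c})={\bf c}$. That $\hat\sigma$ respects the bracket is a direct check: the term $[a,b]\otimes t^{m+n}$ is handled by $\sigma$ being a Lie algebra automorphism, and the cocycle term $m\delta_{m+n,0}\langle a,b\rangle {\bf c}$ is preserved precisely because $\sigma$ preserves $\langle\cdot,\cdot\rangle$. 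Thus $\hat\sigma$ is an automorphism of $\hat{\mathfrak{g}}$; it extends to $U(\hat{\mathfrak{g}})$ and preserves the subalgebra $P=\mathfrak{g}\otimes\C[t]+\C {\bf c}$. Since $\hat\sigma$ fixes ${\bf c}$ and maps $\mathfrak{g}\otimes\C[t]$ to itself, every $x\in P$ and its image $\hat\sigma(x)$ act by the same scalar on $\C_\ell$; hence the assignment $\sigma(u\otimes 1)=\hat\sigma(u)\otimes 1$ for $u\in U(\hat{\mathfrak{g}})$ is well defined (one checks $\sigma(ux\otimes 1)=\hat\sigma(u)\hat\sigma(x)\otimes 1=\hat\sigma(u)\otimes x\cdot 1$ for $x\in P$) and yields a linear isomorphism $\sigma$ of $V_{\hat{\mathfrak{g}}}(\ell,0)$. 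By construction $\sigma({\bf 1})={\bf 1}$, $\sigma$ restricts to the given $\sigma$ on $\mathfrak{g}=V_{\hat{\mathfrak{g}}}(\ell,0)_{(1)}$, it is $\hat\sigma$-equivariant in the sense $\sigma(x\cdot v)=\hat\sigma(x)\cdot\sigma(v)$ for $x\in\hat{\mathfrak{g}}$, and it preserves the $\Z$-grading since it sends a spanning vector $a^{(1)}(-n_1)\cdots a^{(k)}(-n_k){\bf 1}$ to one of the same weight.

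For the second stage I must verify $\sigma(Y(u,x)v)=Y(\sigma u,x)\sigma v$ for all $u,v$. For a generator $a\in\mathfrak{g}$ this is immediate from the equivariance: $\sigma(Y(a,x)v)=\sum_n\hat\sigma(a(n))\sigma(v)\,x^{-n-1}=Y(\sigma a,x)\sigma v$, using $\hat\sigma(a(n))=\sigma(a)(n)$. To pass from generators to all of $V_{\hat{\mathfrak{g}}}(\ell,0)$, I would let $G=\{u\in V_{\hat{\mathfrak{g}}}(\ell,0):\sigma(u_{(n)}v)=(\sigma u)_{(n)}\sigma v\ \text{for all }v,\ n\in\Z\}$. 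Then ${\bf 1}\in G$ and $\mathfrak{g}\subset G$ by the previous computation, and using the standard commutator and iterate (Borcherds) formulas as in \cite{LL} one shows $G$ is closed under the products $u,w\mapsto u_{(m)}w$: expanding $(u_{(m)}w)_{(n)}v$ as a finite sum of composites $u_{(a)}w_{(b)}v$ and $w_{(a)}u_{(b)}v$, applying $\sigma$, and using $u,w\in G$ termwise reassembles, via the same formula applied to $\sigma u,\sigma w,\sigma v$, to $\bigl((\sigma u)_{(m)}(\sigma w)\bigr)_{(n)}\sigma v=\bigl(\sigma(u_{(m)}w)\bigr)_{(n)}\sigma v$. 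Since $\mathfrak{g}$ generates $V_{\hat{\mathfrak{g}}}(\ell,0)$ as a vertex algebra, this forces $G=V_{\hat{\mathfrak{g}}}(\ell,0)$, so $\sigma$ is a vertex algebra automorphism; being grading-preserving, it is an automorphism of the $\Z$-graded vertex algebra.

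Uniqueness follows from the generation property: any extension agrees with $\sigma$ on $\mathfrak{g}$ and sends ${\bf 1}$ to ${\bf 1}$, and a vertex algebra homomorphism is determined by its values on a generating set. I expect the only step needing genuine care to be the closure of $G$ under the $n$-th products; the rest is bookkeeping. That step is entirely standard—it is the usual mechanism by which a map prescribed on generators extends to a vertex algebra homomorphism—and the whole lemma may alternatively be read as the functoriality of the assignment $(\mathfrak{g},\langle\cdot,\cdot\rangle)\mapsto V_{\hat{\mathfrak{g}}}(\ell,0)$.
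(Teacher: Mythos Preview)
Your proof is correct and follows essentially the same approach as the paper: lift $\sigma$ to an automorphism $\hat\sigma$ of $\hat{\mathfrak{g}}$, descend it via the induced module construction to a linear automorphism of $V_{\hat{\mathfrak{g}}}(\ell,0)$ satisfying $\tilde\sigma(a_{n}v)=\sigma(a)_{n}\tilde\sigma(v)$ for $a\in\mathfrak{g}$, and then invoke the fact that $\mathfrak{g}$ generates the vertex algebra to conclude. The only difference is that you spell out explicitly (via your set $G$ and the iterate formula) the passage from intertwining on generators to intertwining on all of $V_{\hat{\mathfrak{g}}}(\ell,0)$, whereas the paper simply asserts this step.
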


\begin{proof}
First of all, lift $\sigma$ to an automorphism $\hat{\sigma}$ of the affine Lie algebra $\hat{\mathfrak{g}}$ by
$$\hat{\sigma}(a\otimes t^{n}+\mu {\bf c})=\sigma(a)\otimes t^{n}+\mu {\bf c}
\  \  \  \mbox{for }a\in {\mathfrak{g}},\  n\in \Z,\  \mu\in \C.$$
This induces an automorphism of the universal enveloping algebra $U(\hat{\mathfrak{g}})$,
 also denoted by $\hat{\sigma}$.
 As $\hat{\sigma}$ preserves the subalgebra $({\mathfrak{g}}\otimes \C[t]+\C {\bf c})$,
 $\hat{\sigma}$ gives rise to a linear automorphism $\tilde{\sigma}$ of $ V_{\hat{\mathfrak{g}}}(\ell,0)$
             such that $\tilde{\sigma}(\textbf{1})=\textbf{1}$ and
             $\tilde{\sigma}(Xv)=\hat{\sigma}(X)\tilde{\sigma}(v)$
             for $X\in U(\widehat{\mathfrak{g}}),\ v\in V_{\hat{\mathfrak{g}}}(\ell,0).$ In particular, we have
             $$\tilde{\sigma}(a_{n}v)=\tilde{\sigma}(a(n)v)=\hat{\sigma}(a(n))\tilde{\sigma}(v)
             =\sigma(a)(n)\tilde{\sigma}(v)=\sigma(a)_{n}\tilde{\sigma}(v)$$
             for $a\in {\mathfrak{g}},\ n\in \Z$.
As $ V_{\hat{\mathfrak{g}}}(\ell,0)$ is generated by ${\mathfrak{g}}$ as a vertex algebra,
             it follows that $\tilde{\sigma}$ is an automorphism of vertex algebra $V_{\hat{\mathfrak{g}}}(\ell,0)$.
 It is clear that $\tilde{\sigma}$ preserves the $\Z$-grading. Thus
 $\tilde{\sigma}$ is an automorphism of the $\Z$-graded vertex algebra $V_{\hat{\mathfrak{g}}}(\ell,0)$.
Furthermore,  we have
$$\tilde{\sigma}(a)=\tilde{\sigma}(a_{-1}{\bf 1})=\sigma(a)_{-1}{\bf 1}=\sigma(a)\  \  \mbox{ for }a\in {\mathfrak{g}}.$$
That is, $\tilde{\sigma}$ extends $\sigma$.  On the other hand,
as $V_{\hat{\mathfrak{g}}}(\ell,0)$ is generated by ${\mathfrak{g}}$ as a vertex algebra, the extension of $\sigma$  is unique.
\end{proof}

In view of Lemma \ref{lautomorphism},
for any automorphism group $G$ of Lie algebra ${\mathfrak{g}}$,
which preserves the bilinear form $\langle\cdot,\cdot\rangle$, we can and we shall consider $G$ as an automorphism group
of the $\Z$-graded vertex algebra $V_{\hat{\mathfrak{g}}}(\ell,0)$.
We shall simply use $\sigma\in G$ for its extension.

We recall the following $q$-analog of the Virasoro algebra from \cite{N}.

\begin{definition}
{\em Let $q$ be a nonzero complex number. Denote by
$D$ the Lie algebra with generators ${\bf c}$ and $D^{\alpha}(n)\ (\alpha,n\in\mathbb{Z})$,
subjects to relations
$$D^{-\alpha}(n)=-D^{\alpha}(n),$$
 \begin{eqnarray}\label{eDbracket}
 [D^{\alpha}(n),D^{\beta}(m)] & = &
    (q-q^{-1})[\alpha m-\beta n]_{q}D^{\alpha+\beta}(m+n)
                                                    \nonumber\\
    &&{} -(q-q^{-1})[\alpha m+\beta n]_{q}D^{\alpha-\beta}(m+n)
                                                               \nonumber\\
    &&{} +([m]_{q^{{\alpha}+{\beta}}}-[m]_{q^{{\alpha}-{\beta}}})\delta_{m+n,0}{\bf c}
\end{eqnarray}
for $m,n,\alpha,\beta\in\Z$,
where ${\bf c}$ is a central element and
$[n]_{q}$ is the $q$-integer defined by
        $$[n]_{q}= \frac{q^{-n}-q^{n}}{q^{-1}-q}.$$}
\end{definition}

\begin{rmk}\label{rlimit-cases} {\em Note that  as a convention in quantum algebra theory, it is understood that $[n]_{q}=n$
for $q=\pm 1$. This is consistent with the fact that
$$\lim_{q\rightarrow \pm 1}[n]_{q}=n.$$
The following are two simple properties for $q$-integers:
\begin{eqnarray}\label{etwo-facts}
[-n]_{q}=-[n]_{q}, \  \   \  \ [n]_{q^{-1}}=[n]_{q}.
\end{eqnarray}}
\end{rmk}

\begin{rmk}\label{rDbasis} {\em From the first relation in the definition of $D$, we have $D^{0}(n)=0$ for all $n\in \Z$ and
we see that $D$ is linearly spanned by vectors ${\bf c}$ and $D^{\alpha}(n)$ for $\alpha,n\in \Z$ with $\alpha\ge 1$.
In principle, one can show that these vectors form a basis as follows:
First, define a non-associative algebra $\tilde{D}$ with a basis $\{ D(\alpha,n)\  | \  \alpha,n\in \Z\}\cup \{ {\bf c}\}$
and with an operation $[\cdot,\cdot]$ defined by (\ref{eDbracket}) and by $[{\bf c},\tilde{D}]=0=[\tilde{D},{\bf c}]$.
Let $J$ be the linear span of $D(-\alpha,n)+D(\alpha,n)$ for $\alpha,n\in \Z$.
From (\ref{eDbracket}), using (\ref{etwo-facts})  we have
\begin{eqnarray*}
&&[D(\alpha,n),D(\beta,m)+D(-\beta,m)]=0,\\
&&[D(\alpha,n),D(\beta,m)]+[D(\beta,m),D(\alpha,n)]\in J
\end{eqnarray*}
for $\alpha,\beta,m,n\in \Z$. It follows that $J$  is a two-sided ideal.
It is clear that the operation $[\cdot,\cdot]$ on the quotient algebra $\tilde{D}/J$
is skew symmetric. Next, the Jacobi identity is straightforward to check, thus $\tilde{D}/J$ is a Lie algebra.
Then it follows that $D\simeq \tilde{D}/J$, which implies that $D$ has the desired basis.}
\end{rmk}

\begin{rmk}\label{rlimit2}
{\em Note that  for $f(x)\in \C[[x,x^{-1}]]$, we have
 \begin{eqnarray}
 \lim_{\mu \rightarrow 0}\frac{1}{q^{-\mu}-q^{\mu}}(f(q^{-\mu}x)-f(q^{\mu}x))=x\frac{d}{dx}f(x).
 \end{eqnarray}
In particular, we have
       \begin{eqnarray}
 \lim_{\mu \rightarrow 0}\frac{1}{q^{-\mu}-q^{\mu}}
\left[\delta\left(\frac{q^{\mu}x_{2}}{x_{1}}\right)
-\delta\left(\frac{q^{-\mu}x_{2}}{x_{1}}\right)\right]=-x_{2}\frac{\partial}{\partial x_{2}}\delta\left(\frac{x_{2}}{x_{1}}\right).
 \end{eqnarray}   }
\end{rmk}

[Note that the limit of the Lie algebra $D$ for $q\rightarrow \pm 1$ is an abelian Lie algebra.
Remark: Set $\bar{D}^{\alpha}(n)=\frac{1}{q-q^{-1}}D^{\alpha}(n)$ for $\alpha,n\in \Z$.  Then
\begin{eqnarray*}\label{eDbracket}
 [\bar{D}^{\alpha}(n),\bar{D}^{\beta}(m)] & = &
    [\alpha m-\beta n]_{q}\bar{D}^{\alpha+\beta}(m+n) -[\alpha m+\beta n]_{q}\bar{D}^{\alpha-\beta}(m+n)
                                                               \nonumber\\
    &&{} +(q-q^{-1})^{-2}([m]_{q^{{\alpha}+{\beta}}}-[m]_{q^{{\alpha}-{\beta}}})\delta_{m+n,0}{\bf c}.
\end{eqnarray*}
Taking the limit $q\rightarrow 1$, we get
\begin{eqnarray*}\label{eDbracket}
 [\bar{D}^{\alpha}(n),\bar{D}^{\beta}(m)] & = &
    (\alpha m-\beta n)\bar{D}^{\alpha+\beta}(m+n)
  -(\alpha m+\beta n)\bar{D}^{\alpha-\beta}(m+n)
                                                               \nonumber\\
    &&{} +\lim_{q\rightarrow 1}(q-q^{-1})^{-2}([m]_{q^{{\alpha}+{\beta}}}-[m]_{q^{{\alpha}-{\beta}}})\delta_{m+n,0}{\bf c}.
\end{eqnarray*}
(Calculate the limit in the central extension part.)
The limit algebra is a known Lie algebra.]

From now on, we shall assume that $q$ is {\em not a root of unity.}

For $\alpha\in\mathbb{Z},$ form a generating function
\begin{eqnarray}
D^{\alpha}(x) = \sum_{n\in\mathbb{Z}}D^{\alpha}(n)x^{-n-1}.
\end{eqnarray}
For $\alpha,\beta\in \Z$, we have
\begin{eqnarray}\label{eDbracket-generating}
 &&[D^{\alpha}(x_{1}),D^{\beta}(x_{2})] \nonumber\\
 &=&\sum_{m,n\in\mathbb{Z}}[D^{\alpha}(n),D^{\beta}(m)]x_{1}^{-n-1}x_{2}^{-m-1}
                                                            \nonumber\\
&=&q^{-\alpha}D^{\alpha+\beta}(q^{-\alpha}x_{2})x_{1}^{-1}\delta\left(\frac{q^{-\alpha}x_{2}}{q^{\beta}x_{1}}\right)
 -q^{\alpha}D^{\alpha+\beta}(q^{\alpha}x_{2})x_{1}^{-1}\delta\left(\frac{q^{\alpha}x_{2}}{q^{-\beta}x_{1}}\right)
                                                            \nonumber\\
&&{} -q^{-\alpha}D^{\alpha-\beta}(q^{-\alpha}x_{2})x_{1}^{-1}\delta\left(\frac{q^{-\alpha}x_{2}}{q^{-\beta}x_{1}}\right)
+q^{\alpha}D^{\alpha-\beta}(q^{\alpha}x_{2})x_{1}^{-1}\delta\left(\frac{q^{\alpha}x_{2}}{q^{\beta}x_{1}}\right)
                                                               \nonumber\\
&&{} + \frac{1}{q^{-\alpha-\beta}-q^{\alpha+\beta}}
\left[x_{1}^{-1}\delta\left(\frac{q^{\alpha}x_{2}}{q^{-\beta}x_{1}}\right)
-x_{1}^{-1}\delta\left(\frac{q^{-\alpha}x_{2}}{q^{\beta}x_{1}}\right)\right]{\bf c}x_{2}^{-1}
                                                     \nonumber\\
&&{} -\frac{1}{q^{\beta-\alpha}-q^{\alpha-\beta}}
 \left[x_{1}^{-1}\delta\left(\frac{q^{\alpha}x_{2}}{q^{\beta}x_{1}}\right)
  -x_{1}^{-1}\delta\left(\frac{q^{-\alpha}x_{2}}{q^{-\beta}x_{1}}\right)\right]{\bf c}x_{2}^{-1}\nonumber\\
  &=&q^{-\alpha}D^{\alpha+\beta}(q^{-\alpha}x_{2})x_{1}^{-1}\delta\left(\frac{q^{-\alpha-\beta}x_{2}}{x_{1}}\right)
 -q^{\alpha}D^{\alpha+\beta}(q^{\alpha}x_{2})x_{1}^{-1}\delta\left(\frac{q^{\alpha+\beta}x_{2}}{x_{1}}\right)
                                                            \nonumber\\
&&{} -q^{-\alpha}D^{\alpha-\beta}(q^{-\alpha}x_{2})x_{1}^{-1}\delta\left(\frac{q^{\beta-\alpha}x_{2}}{x_{1}}\right)
+q^{\alpha}D^{\alpha-\beta}(q^{\alpha}x_{2})x_{1}^{-1}\delta\left(\frac{q^{\alpha-\beta}x_{2}}{x_{1}}\right)
                                                               \nonumber\\
&&{} + \frac{1}{q^{-\alpha-\beta}-q^{\alpha+\beta}}
\left[x_{1}^{-1}\delta\left(\frac{q^{\alpha+\beta}x_{2}}{x_{1}}\right)
-x_{1}^{-1}\delta\left(\frac{q^{-\alpha-\beta}x_{2}}{x_{1}}\right)\right]{\bf c}x_{2}^{-1}
                                                     \nonumber\\
&&{} -\frac{1}{q^{\beta-\alpha}-q^{\alpha-\beta}}
 \left[x_{1}^{-1}\delta\left(\frac{q^{\alpha-\beta}x_{2}}{x_{1}}\right)
  -x_{1}^{-1}\delta\left(\frac{q^{\beta-\alpha}x_{2}}{x_{1}}\right)\right]{\bf c}x_{2}^{-1},  \label{eq:2.8}
   \end{eqnarray}
   where it is understood that
   \begin{eqnarray*}
 &&  \frac{1}{q^{-\alpha-\beta}-q^{\alpha+\beta}}
\left[x_{1}^{-1}\delta\left(\frac{q^{\alpha+\beta}x_{2}}{x_{1}}\right)
-x_{1}^{-1}\delta\left(\frac{q^{-\alpha-\beta}x_{2}}{x_{1}}\right)\right]
=-x_{2}\frac{\partial}{\partial x_{2}}x_{1}^{-1}\delta\left(\frac{x_{2}}{x_{1}}\right),\\
&&  \frac{1}{q^{\beta-\alpha}-q^{\alpha-\beta}}
\left[x_{1}^{-1}\delta\left(\frac{q^{\alpha-\beta}x_{2}}{x_{1}}\right)
-x_{1}^{-1}\delta\left(\frac{q^{\beta-\alpha}x_{2}}{x_{1}}\right)\right]
=-x_{2}\frac{\partial}{\partial x_{2}}x_{1}^{-1}\delta\left(\frac{x_{2}}{x_{1}}\right)
   \end{eqnarray*}
 for $\alpha+\beta=0$ and for $\alpha-\beta=0$, respectively. (Recall Remarks \ref{rlimit-cases} and \ref{rlimit2}.)
 It can be readily seen that the defining relation (\ref{eDbracket}) is equivalent to (\ref{eDbracket-generating}).

 Motivated by this observation we make the following modification:

   \begin{definition}
 {\em For $\alpha\in \Z$, we set
 \begin{eqnarray}
\tilde{D}^{\alpha}(x)=\begin{cases}D^{\alpha}(x)&\ \ \mbox{ if }\alpha=0\\
D^{\alpha}(x)-\frac{1}{q^{-\alpha}-q^{\alpha}}{\bf c}x^{-1}&\ \ \mbox{ if }\alpha\ne 0.
\end{cases}
 \end{eqnarray}}
   \end{definition}

 Then we immediately have:

  \begin{lem}\label{tildeD-characterization}
 The defining relations of $D$ are equivalent to
 \begin{eqnarray}
 \tilde{D}^{-\alpha}(x)=-\tilde{D}^{\alpha}(x),
 \end{eqnarray}
 \begin{eqnarray}\label{eDbracket-new}
 &&[\tilde{D}^{\alpha}(x_{1}),\tilde{D}^{\beta}(x_{2})] \nonumber\\
 &=&q^{-\alpha}\tilde{D}^{\alpha+\beta}(q^{-\alpha}x_{2})x_{1}^{-1}\delta\left(\frac{q^{-\alpha-\beta}x_{2}}{x_{1}}\right)
 -q^{\alpha}\tilde{D}^{\alpha+\beta}(q^{\alpha}x_{2})x_{1}^{-1}\delta\left(\frac{q^{\alpha+\beta}x_{2}}{x_{1}}\right)
                                                            \nonumber\\
&&{} -q^{-\alpha}\tilde{D}^{\alpha-\beta}(q^{-\alpha}x_{2})x_{1}^{-1}\delta\left(\frac{q^{\beta-\alpha}x_{2}}{x_{1}}\right)
+q^{\alpha}\tilde{D}^{\alpha-\beta}(q^{\alpha}x_{2})x_{1}^{-1}\delta\left(\frac{q^{\alpha-\beta}x_{2}}{x_{1}}\right)
                                                               \nonumber\\
&&+ \left(\delta_{\alpha-\beta,0}-\delta_{\alpha+\beta,0} \right)\frac{\partial}{\partial x_{2}}
x_{1}^{-1}\delta\left(\frac{x_{2}}{x_{1}}\right){\bf c}
   \end{eqnarray}
   for $\alpha,\beta\in \Z$.
 \end{lem}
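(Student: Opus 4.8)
The plan is to exploit that passing from $D^{\alpha}(x)$ to $\tilde{D}^{\alpha}(x)$ only shifts each generating function by a central term, so brackets are left unchanged and the whole statement reduces to rewriting the right-hand side of the already-established identity (\ref{eDbracket-generating}). Since the assignment $D^{\alpha}(x)\mapsto \tilde{D}^{\alpha}(x)$ is invertible (for $\alpha\ne 0$ one recovers $D^{\alpha}(x)=\tilde{D}^{\alpha}(x)+\frac{1}{q^{-\alpha}-q^{\alpha}}{\bf c}x^{-1}$, while $\tilde{D}^{0}(x)=D^{0}(x)=0$), it suffices to show that the $D$-relations in generating-function form translate into the two displayed $\tilde{D}$-relations; the converse then follows by running the substitution backwards. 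I would first dispose of skew-symmetry: for $\alpha=0$ both sides vanish since $\tilde{D}^{0}(x)=0$, while for $\alpha\ne 0$, combining $D^{-\alpha}(x)=-D^{\alpha}(x)$ with $\frac{1}{q^{\alpha}-q^{-\alpha}}=-\frac{1}{q^{-\alpha}-q^{\alpha}}$ gives $\tilde{D}^{-\alpha}(x)=-\tilde{D}^{\alpha}(x)$ directly, and this computation is manifestly reversible.

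The substance is the bracket relation. Because ${\bf c}x^{-1}$ is central, $[\tilde{D}^{\alpha}(x_{1}),\tilde{D}^{\beta}(x_{2})]=[D^{\alpha}(x_{1}),D^{\beta}(x_{2})]$, so I would start from the right-hand side of (\ref{eDbracket-generating}) and, whenever $\alpha\pm\beta\ne 0$, substitute $D^{\alpha\pm\beta}(q^{\mp\alpha}x_{2})=\tilde{D}^{\alpha\pm\beta}(q^{\mp\alpha}x_{2})+\frac{1}{q^{-(\alpha\pm\beta)}-q^{\alpha\pm\beta}}{\bf c}(q^{\mp\alpha}x_{2})^{-1}$ into the four leading terms. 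The $\tilde{D}$-parts reproduce verbatim the first four terms of (\ref{eDbracket-new}). The central parts so produced must then be matched against the two explicit central terms in (\ref{eDbracket-generating}): after pulling the scalars $q^{\mp\alpha}$ through, I expect the two contributions coming from the $D^{\alpha+\beta}$ terms to be exactly the negative of the explicit $\frac{1}{q^{-\alpha-\beta}-q^{\alpha+\beta}}[\cdots]$ term, hence to cancel it, and likewise the contributions from the $D^{\alpha-\beta}$ terms to cancel the explicit $\frac{1}{q^{\beta-\alpha}-q^{\alpha-\beta}}[\cdots]$ term. This leaves no central term when both $\alpha+\beta\ne 0$ and $\alpha-\beta\ne 0$, in agreement with $\delta_{\alpha-\beta,0}-\delta_{\alpha+\beta,0}=0$ there.

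The delicate part, which I expect to be the main obstacle, is the degenerate cases $\alpha+\beta=0$ or $\alpha-\beta=0$, where the corresponding $\tilde{D}^{0}$ and $D^{0}$ terms drop out, so the matching explicit central term is no longer available for cancellation and must instead be read through the limit convention recorded after (\ref{eq:2.8}) (compare Remarks \ref{rlimit-cases} and \ref{rlimit2}). For instance, when $\alpha+\beta=0$ and $\alpha-\beta\ne 0$, the $D^{\alpha-\beta}$ contributions still cancel the $\alpha-\beta$ central term as above, and the surviving piece is the explicit $\alpha+\beta$ central term, which by the convention equals $-x_{2}\frac{\partial}{\partial x_{2}}x_{1}^{-1}\delta(x_{2}/x_{1}){\bf c}x_{2}^{-1}$. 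To recognize this as the right-hand side of (\ref{eDbracket-new}), I would invoke the formal identity $x_{2}^{-1}\cdot x_{2}\frac{\partial}{\partial x_{2}}x_{1}^{-1}\delta(x_{2}/x_{1})=\frac{\partial}{\partial x_{2}}x_{1}^{-1}\delta(x_{2}/x_{1})$, immediate from $x_{1}^{-1}\delta(x_{2}/x_{1})=\sum_{n\in\Z}x_{1}^{-n-1}x_{2}^{n}$, which rewrites the surviving piece as $-\frac{\partial}{\partial x_{2}}x_{1}^{-1}\delta(x_{2}/x_{1}){\bf c}$, exactly the value of $(\delta_{\alpha-\beta,0}-\delta_{\alpha+\beta,0})\frac{\partial}{\partial x_{2}}x_{1}^{-1}\delta(x_{2}/x_{1}){\bf c}$ in this case.

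The symmetric case $\alpha-\beta=0\ne\alpha+\beta$ is handled identically, the surviving explicit $\alpha-\beta$ term (now read through the convention) producing the $+1$ coefficient, while $\alpha=\beta=0$ is trivial with both sides zero. Assembling the four cases yields (\ref{eDbracket-new}); since every manipulation — the central shift, the cancellations, and the delta-function rewriting — is reversible, reading the argument backwards recovers (\ref{eDbracket-generating}) and hence, via its known equivalence with (\ref{eDbracket}), the defining relations of $D$. I do not anticipate any difficulty beyond the bookkeeping of the degenerate cases and the careful bookkeeping of the scalar factors $q^{\pm\alpha}$ when pulling them through the delta functions.
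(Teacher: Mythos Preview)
Your proposal is correct and follows exactly the route the paper intends: the paper provides no proof at all, merely saying ``Then we immediately have'' after displaying (\ref{eDbracket-generating}) and the definition of $\tilde{D}^{\alpha}(x)$, so your argument is precisely the bookkeeping the reader is meant to supply. Your handling of the central-term cancellations and of the degenerate cases via the convention following (\ref{eq:2.8}) is accurate, including the identity $x_{2}^{-1}\cdot x_{2}\frac{\partial}{\partial x_{2}}x_{1}^{-1}\delta(x_{2}/x_{1})=\frac{\partial}{\partial x_{2}}x_{1}^{-1}\delta(x_{2}/x_{1})$.
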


\begin{definition}\label{restricted}
{\em A $D$-module $W$ is said to be {\em restricted} if
                    for any $\alpha\in \mathbb{Z}$ and $w\in W,$ $D^{\alpha}(n)w = 0 $
                    for $n$ sufficiently large, or equivalently, if
                    $D^{\alpha}(x)\in {\mathcal{E}}(W)$ for any $\alpha\in \Z$, or equivalently, if
                    $\tilde{D}^{\alpha}(x)\in {\mathcal{E}}(W)$ for any $\alpha\in \Z$.
                    We say a $D$-module W is of {\em level} $\ell\in \C$
                if the central element ${\bf c}$ acts as scalar $\ell.$}
\end{definition}

Furthermore, for $ \alpha,r\in\mathbb{Z}$ we set
\begin{eqnarray}\label{eDalpha-r}
D^{\alpha,r}(x) = q^{r}\tilde{D}^{\alpha}(q^{r}x).
\end{eqnarray}
By  (\ref{eDbracket-new}), we have
\begin{eqnarray}\label{eq:2.9}
&&[D^{\alpha,r}(x_{1}),D^{\beta,s}(x_{2})] \nonumber\\
&=& q^{r+s}[\tilde{D}^{\alpha}(q^{r}x_{1}),\tilde{D}^{\beta}(q^{s}x_{2})]         \nonumber\\
        & = &D^{\alpha+\beta,-\alpha+s}(x_{2})x_{1}^{-1}\delta\left(\frac{q^{-\alpha+s}x_{2}}{q^{\beta+r}x_{1}}\right)
              -D^{\alpha+\beta,\alpha+s}(x_{2})x_{1}^{-1}\delta\left(\frac{q^{\alpha+s}x_{2}}{q^{-\beta+r}x_{1}}\right)
                                                              \nonumber\\
         &&{} -D^{\alpha-\beta,-\alpha+s}(x_{2})x_{1}^{-1}\delta\left(\frac{q^{-\alpha+s}x_{2}}{q^{-\beta+r}x_{1}}\right)
         +D^{\alpha-\beta,\alpha+s}(x_{2})x_{1}^{-1}\delta\left(\frac{q^{\alpha+s}x_{2}}{q^{\beta+r}x_{1}}\right)
                                                                   \nonumber\\
       &&{} +\left(\delta_{\alpha-\beta,0}-\delta_{\alpha+\beta,0} \right)\frac{\partial}{\partial x_{2}}
x_{1}^{-1}\delta\left(\frac{q^{s-r}x_{2}}{x_{1}}\right){\bf c}
\end{eqnarray}
for $\alpha,\beta,r,s\in \Z$.

This observation leads us to the following characterization of the Lie algebra $D$:

\begin{prop}\label{psecond-def-D} Lie algebra $D$ is isomorphic to the Lie algebra $L$ with generators ${\bf c}$ and $D^{\alpha,r}(n)$
for $\alpha,r,n\in \Z$, subject to relations $[{\bf c}, L]=0$,
\begin{eqnarray}
D^{-\alpha,r}(x)=-D^{\alpha,r}(x), \ \ \   D^{\alpha,r+s}(x)=q^{s}D^{\alpha,r}(q^{s}x)\label{einvariance}
\end{eqnarray}
for $\alpha,r,s\in \Z$, and subject to the relation (\ref{eq:2.9}),
 where $D^{\alpha,r}(x)=\sum_{n\in\Z}D^{\alpha,r}(n)x^{-n-1}$.
\end{prop}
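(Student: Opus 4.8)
The plan is to construct mutually inverse Lie algebra homomorphisms $\psi\colon L\to D$ and $\phi\colon D\to L$ directly on generators, using the two equivalences already set up: Lemma \ref{tildeD-characterization}, which identifies the defining relations of $D$ with the relations $\tilde{D}^{-\alpha}(x)=-\tilde{D}^{\alpha}(x)$ and (\ref{eDbracket-new}) for the modified fields, and the computation yielding (\ref{eq:2.9}), which translates (\ref{eDbracket-new}) into the bracket relation among the fields $D^{\alpha,r}(x)=q^{r}\tilde{D}^{\alpha}(q^{r}x)$. Conceptually the two presentations differ only by an invertible change of generating functions, and the two homomorphisms will make this precise.

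First I would define $\psi$. In $D$ the generating functions $q^{r}\tilde{D}^{\alpha}(q^{r}x)$ have coefficients lying in $D$, and I would send each abstract generator $D^{\alpha,r}(n)$ of $L$ to the corresponding coefficient, with ${\bf c}\mapsto {\bf c}$. To see this respects all the defining relations of $L$, note that $D^{-\alpha,r}(x)=-D^{\alpha,r}(x)$ is immediate from $\tilde{D}^{-\alpha}(x)=-\tilde{D}^{\alpha}(x)$; the invariance relation $D^{\alpha,r+s}(x)=q^{s}D^{\alpha,r}(q^{s}x)$ is a purely formal consequence of the substitution $D^{\alpha,r}(x)=q^{r}\tilde{D}^{\alpha}(q^{r}x)$; and the bracket relation (\ref{eq:2.9}) holds by the very computation preceding the proposition, which rests on Lemma \ref{tildeD-characterization}. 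Hence $\psi$ is a well-defined homomorphism.

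Next I would build $\phi$ in the reverse direction by recovering the original fields from the $r=0$ data in $L$: set $\phi(D^{\alpha}(x))=D^{\alpha,0}(x)+\frac{1}{q^{-\alpha}-q^{\alpha}}{\bf c}x^{-1}$ for $\alpha\ne 0$, $\phi(D^{0}(x))=0$, and $\phi({\bf c})={\bf c}$, reading off the coefficients as the images of $D^{\alpha}(n)$. To verify this is a homomorphism I must check that the images satisfy the defining relations of $D$. By Lemma \ref{tildeD-characterization} it suffices to verify the $\tilde{D}$-form of those relations for the fields $D^{\alpha,0}(x)$, and this is exactly (\ref{eq:2.9}) specialized to $r=s=0$, after using the invariance relation (\ref{einvariance}) to rewrite each right-hand term $D^{\gamma,\pm\alpha}(x_{2})$ as $q^{\pm\alpha}D^{\gamma,0}(q^{\pm\alpha}x_{2})$ so that the delta-function arguments collapse into the form appearing in (\ref{eDbracket-new}).

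Finally I would check that $\psi$ and $\phi$ are mutually inverse. The composite $\psi\circ\phi$ is the identity on $D$ because it sends $D^{\alpha}(n)\mapsto D^{\alpha}(n)$ by construction. For $\phi\circ\psi$, the key point is that the invariance relation (\ref{einvariance}) forces $D^{\alpha,r}(n)=q^{-rn}D^{\alpha,0}(n)$ in $L$, so that $L$ is generated by ${\bf c}$ together with $\{D^{\alpha,0}(n)\}$; on these generators $\phi\circ\psi$ is visibly the identity, hence it is the identity on all of $L$. I expect the main obstacle to be the bookkeeping in the reverse direction: confirming that the central $x^{-1}$ correction built into $\tilde{D}^{\alpha}$ reassembles into precisely the central term $([m]_{q^{\alpha+\beta}}-[m]_{q^{\alpha-\beta}})\delta_{m+n,0}{\bf c}$ of (\ref{eDbracket}), and separately handling the degenerate cases $\alpha=0$ and $\alpha\pm\beta=0$, where the derivative-of-delta terms of (\ref{eDbracket-new}) appear and must be matched with care.
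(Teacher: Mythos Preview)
Your proposal is correct and follows essentially the same approach as the paper: both construct a homomorphism $L\to D$ by sending the abstract field $D^{\alpha,r}(x)$ to $q^{r}\tilde{D}^{\alpha}(q^{r}x)$, and an inverse homomorphism $D\to L$ by sending $\tilde{D}^{\alpha}(x)$ to $D^{\alpha,0}(x)$ (your formulation via $D^{\alpha}(x)\mapsto D^{\alpha,0}(x)+\frac{1}{q^{-\alpha}-q^{\alpha}}{\bf c}x^{-1}$ is exactly this), using Lemma~\ref{tildeD-characterization} and the invariance relation (\ref{einvariance}) to verify well-definedness. The paper is slightly terser in checking that the two maps invert one another, whereas you spell out the reduction to the $r=0$ generators explicitly, but the argument is the same.
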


\begin{proof} Given Lie algebra $D$, with $D^{\alpha,r}(x)$ defined in (\ref{eDalpha-r}),
we see that (\ref{einvariance}) and (\ref{eq:2.9}) hold.
It follows that there is a natural Lie algebra homomorphism $\theta$ from $L$ onto $D$,
sending $D^{\alpha,r}(x)$ to $q^{r}\tilde{D}^{\alpha}(q^{r}x)$ for $\alpha,r\in \Z$.
On the other hand, for Lie algebra $L$ we have
$$D^{\alpha,r}(x)=q^{r}D^{\alpha,0}(q^{r}x)\  \ \mbox{ and }\  D^{-\alpha,0}(x)=-D^{\alpha,0}(x)$$
for $\alpha,r\in \Z$. It is clear that relation (\ref{eq:2.9}) is equivalent to (\ref{eDbracket-new})
with $\tilde{D}^{\alpha}(x)=D^{\alpha,0}(x)$ for $\alpha\in \Z$.
Then there exists a natural Lie algebra homomorphism from $D$ onto $L$,
sending $\tilde{D}^{\alpha}(x)$ to $D^{\alpha,0}(x)$ for $\alpha\in \Z$. Consequently, $\theta$ is a Lie algebra isomorphism.
\end{proof}

Next we associate the $q$-Virasoro algebra $D$ with a specific $\Gamma$-vertex algebra and
its quasi modules. To this end we first introduce a Lie algebra.

\begin{definition}  {\em Let ${\mathfrak{D}}$ be the Lie algebra with generators $d^{\alpha,r}$ for $\alpha, r\in \mathbb{Z}$,
subject to relations
\begin{eqnarray}
&&\hspace{2cm}d^{-\alpha,r}=-d^{\alpha,r},\\
&& [d^{\alpha,r},d^{\beta,s}] =
    \delta_{\alpha+\beta,s-r}d^{\alpha+\beta,-\alpha+s}
   -\delta_{\alpha+\beta,r-s}d^{\alpha+\beta,\alpha+s}
                         \nonumber\\
   &&\hspace{2.2cm}
   -\delta_{\alpha-\beta,s-r}d^{\alpha-\beta,-\alpha+s}
   +\delta_{\alpha-\beta,r-s}d^{\alpha-\beta,\alpha+s}  \label{eq:2.11}
\end{eqnarray}
for $\alpha,\beta, r,s\in \Z$.}
\end{definition}

It is clear from the defining relations that  ${\mathfrak{D}}$ is  linearly spanned by vectors
 $d^{\alpha,r}$ for $\alpha,  r\in\mathbb{Z}$ with $\alpha\ge 1$.
One can show directly that  these vectors actually form a basis of ${\mathfrak{D}}$, following the line of Remark \ref{rDbasis}.
In the following,
we shall show that Lie algebra ${\mathfrak{D}}$ is  isomorphic to a subalgebra of Lie algebra $\mathfrak{gl}_{\infty}$,
which will immediately imply that ${\mathfrak{D}}$ has the desired basis.

Recall that
$\mathfrak{gl}_{\infty}$ is the (Lie) algebra of doubly infinite complex matrices with only finitely many nonzero entries.
For $m,n\in \mathbb{Z}$, let $E_{m,n}$ denote the matrix whose only nonzero entry is the $(m,n)$-entry which is $1$.
Then  $E_{m,n}$ for $m,n\in \mathbb{Z}$ form a basis of $\mathfrak{gl}_{\infty}$, where
$$[E_{m,n}, E_{p,q}]=E_{m,n}\cdot E_{p,q}-E_{p,q}\cdot E_{m,n}=\delta_{n,p}E_{m,q}-\delta_{q,m}E_{p,n}.$$
Equip $\mathfrak{gl}_{\infty}$ with a bilinear form $\langle\cdot,\cdot\rangle$  defined by
\begin{eqnarray}
\langle E_{m,n},E_{r,s}\rangle =\delta_{m,s}\delta_{n,r}\ \ \   \mbox{ for }m,n,r,s\in \Z.
\end{eqnarray}
This form is symmetric, associative (invariant), and non-degenerate.
Set
\begin{eqnarray}
\mathcal{A}=\mbox{span}\{E_{m,n}\   |\   m,n\in \mathbb{Z}\ \mbox{with}\   m+n\in 2\mathbb{Z}\},
\end{eqnarray}
which is a (Lie) subalgebra of $\mathfrak{gl}_{\infty}$. For $\alpha,m\in \Z$, set
$$G_{\alpha,m}=E_{\alpha+m,m-\alpha}\in \mathcal{A}.$$
 Then  $G_{\alpha,m}$  for $\alpha,m\in \Z$ form a basis of  $\mathcal{A}$.
Let $\tau$ be the order-$2$ automorphism of Lie algebra
 $\mathfrak{gl}_{\infty}$ defined by
 \begin{eqnarray}\label{etau-def}
 \tau(E_{m,n})=-E_{n,m}\ \ \  \mbox{ for }m,n\in\mathbb{Z}.
 \end{eqnarray}
 Clearly, $\tau$ preserves $\mathcal{A}$ and we have $\tau (G_{\alpha,m})=-G_{-\alpha,m}$ for $\alpha,m\in \Z$.
 Set
 \begin{eqnarray}
 G^{\tau}_{\alpha,m}=G_{\alpha,m}-G_{-\alpha,m}=E_{\alpha+m,m-\alpha}-E_{m-\alpha,m+\alpha}\in {\mathcal{A}}.
 \end{eqnarray}
 Let ${\mathcal{A}}^{\tau}$ denote the Lie subalgebra
 of $\tau$-fixed points in $\mathcal{A}$.
Then
$$G^{\tau}_{-\alpha,m}=-G^{\tau}_{\alpha,m}\  \  \mbox{ for }\alpha,m\in \Z$$
and
 $\{G^{\tau}_{\alpha,m}\  |\   \alpha\geq1, m\in \mathbb{Z}\}$ is a basis of ${\mathcal{A}}^{\tau}$.
By a straightforward calculation we get
 \begin{eqnarray}
 \quad\qquad\qquad [G^{\tau}_{\alpha,r},G^{\tau}_{\beta,s}] &=
   & \delta_{\alpha+\beta,r-s}G^{\tau}_{\alpha+\beta,s+\alpha}-\delta_{\alpha+\beta,s-r}G^{\tau}_{\alpha+\beta,\beta+r}
                         \nonumber\\
   &&{}+ \delta_{\alpha-\beta,s-r}G^{\tau}_{\alpha-\beta,r-\beta}- \delta_{\alpha-\beta,r-s}G^{\tau}_{\alpha-\beta,\alpha+s}
\end{eqnarray}
for $\alpha,\beta,r,s\in \Z$. Furthermore, we have
\begin{eqnarray}
\langle G^{\tau}_{\alpha,r},G^{\tau}_{\beta,s}\rangle
&=&2\left(\delta_{\alpha+\beta,r-s}\delta_{\alpha+\beta,s-r}-\delta_{\alpha-\beta,r-s}\delta_{\alpha-\beta,s-r}\right)\nonumber\\
&=&2\delta_{r,s}\left(\delta_{\alpha+\beta,0}-\delta_{\alpha-\beta,0}\right).
\end{eqnarray}

Therefore, we have proved:

\begin{lem}\label{identification}
Lie algebra ${\mathfrak{D}}$ is isomorphic to the Lie algebra ${\mathcal{A}}^{\tau}$ with $d^{\alpha,r}$
corresponding to $G^{\tau}_{-\alpha,r}$ $(=-G^{\tau}_{\alpha,r})$ for $\alpha,r \in \Z$  and ${\mathfrak{D}}$
has a basis $\{d^{\alpha,r}\  | \  \alpha,  r\in\mathbb{Z}\   \mbox{ with }\alpha\ge 1\}$. Furthermore,
the bilinear form $\langle\cdot,\cdot\rangle$ on ${\mathfrak{D}}$, defined by
\begin{eqnarray}\label{ebilinear-form}
\langle d^{\alpha,r},d^{\beta,s}\rangle =\delta_{r,s}\left(\delta_{\alpha-\beta,0}-\delta_{\alpha+\beta,0}\right)
\end{eqnarray}
for $ \alpha,\beta,r, s\in\mathbb{Z}$, is
symmetric and invariant.
\end{lem}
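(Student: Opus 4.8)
The plan is to write down an explicit map $\phi:\mathfrak{D}\to\mathcal{A}^{\tau}$ specified on generators by $d^{\alpha,r}\mapsto G^{\tau}_{-\alpha,r}$, show it is a Lie algebra isomorphism, and then deduce both the asserted basis of $\mathfrak{D}$ and the two properties of the form by transport of structure along $\phi$.

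First I would use the universal property of a Lie algebra given by generators and relations: to obtain a homomorphism $\phi$ it is enough to verify that the elements $G^{\tau}_{-\alpha,r}$ of $\mathcal{A}^{\tau}$ satisfy the two families of defining relations of $\mathfrak{D}$. The relation $d^{-\alpha,r}=-d^{\alpha,r}$ becomes the identity $G^{\tau}_{\alpha,r}=-G^{\tau}_{-\alpha,r}$, which is exactly the relation $G^{\tau}_{-\alpha,m}=-G^{\tau}_{\alpha,m}$ recorded above. For the bracket relation (\ref{eq:2.11}) I would substitute $a=-\alpha$ and $b=-\beta$ into the computed formula for $[G^{\tau}_{a,r},G^{\tau}_{b,s}]$, and then use the support of each Kronecker delta to normalize the second index of every term; for instance, on the locus $\alpha+\beta=r-s$ one has $r-\beta=\alpha+s$, so $G^{\tau}_{-(\alpha+\beta),r-\beta}$ coincides with $\phi(d^{\alpha+\beta,\alpha+s})$. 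A term-by-term comparison then matches the four summands of the $G^{\tau}$-commutator with the $\phi$-images of the four summands of (\ref{eq:2.11}). This index bookkeeping is the only place where care is needed, so I regard it as the main (if routine) obstacle; everything after is formal.

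Next I would settle bijectivity. Since $\{G^{\tau}_{\alpha,r}\mid\alpha\ge1,\ r\in\Z\}$ is a basis of $\mathcal{A}^{\tau}$ and $\phi(d^{\alpha,r})=G^{\tau}_{-\alpha,r}=-G^{\tau}_{\alpha,r}$, the map $\phi$ is surjective. For injectivity I would invoke the observation, already made after the definition of $\mathfrak{D}$, that $\mathfrak{D}$ is spanned by $\{d^{\alpha,r}\mid\alpha\ge1\}$: the map $\phi$ sends this spanning set onto a basis of $\mathcal{A}^{\tau}$, so any linear dependence among these $d^{\alpha,r}$ would map to a dependence among basis vectors and hence be trivial. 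Therefore these vectors are linearly independent, they form the asserted basis of $\mathfrak{D}$, and $\phi$ is an isomorphism.

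Finally, for the bilinear form I would transport the trace form along $\phi$. The form on $\mathfrak{gl}_{\infty}$ is $\langle A,B\rangle=\mathrm{tr}(AB)$, which is symmetric and invariant and restricts to a symmetric invariant form on the subalgebra $\mathcal{A}^{\tau}$, with values $\langle G^{\tau}_{\alpha,r},G^{\tau}_{\beta,s}\rangle=2\delta_{r,s}(\delta_{\alpha+\beta,0}-\delta_{\alpha-\beta,0})$ as computed above. Substituting $\alpha\mapsto-\alpha$ and $\beta\mapsto-\beta$ shows that the pullback of this form along $\phi$ equals $2\delta_{r,s}(\delta_{\alpha+\beta,0}-\delta_{\alpha-\beta,0})$ on the pair $(d^{\alpha,r},d^{\beta,s})$, so the form (\ref{ebilinear-form}) is precisely $-\frac{1}{2}$ times that pullback. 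Since symmetry and invariance are preserved under scalar multiplication and under pullback along a Lie algebra isomorphism, the form (\ref{ebilinear-form}) on $\mathfrak{D}$ is symmetric and invariant, which completes the proof.
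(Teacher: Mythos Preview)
Your proposal is correct and follows essentially the same approach as the paper: the paper simply computes $[G^{\tau}_{\alpha,r},G^{\tau}_{\beta,s}]$ and $\langle G^{\tau}_{\alpha,r},G^{\tau}_{\beta,s}\rangle$ in the discussion preceding the lemma and then declares ``Therefore, we have proved'', leaving the reader to supply exactly the universal-property, bijectivity, and transport-of-structure steps that you spell out. Your explicit identification of the scalar factor $-\tfrac{1}{2}$ relating the form (\ref{ebilinear-form}) to the pulled-back trace form is a detail the paper omits but which is indeed needed to conclude.
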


Now, we equip Lie algebra ${\mathfrak{D}}$ with the symmetric invariant bilinear form $\langle \cdot,\cdot\rangle$ defined as in Lemma \ref{identification}.
Then we have an affine Lie algebra
   $$\widehat{\mathfrak{D}} = {\mathfrak{D}}\otimes\mathbb{C}[t,t^{-1}]\oplus \C {\bf c},  $$
where for $\alpha,\beta,r,s\in \Z$,
\begin{eqnarray}
 &&[d^{\alpha,r}(x_{1}),d^{\beta,s}(x_{2})]     \nonumber\\
  &=&[d^{\alpha,r},d^{\beta,s}](x_{2})x_{1}^{-1}\delta\left(\frac{x_{2}}{x_{1}}\right)
  +\langle d^{\alpha,r},d^{\beta,s}\rangle \frac{\partial}{\partial x_{2}}x_{1}^{-1}\delta\left(\frac{x_{2}}{x_{1}}\right){\bf c}\nonumber\\
  &=& \delta_{\alpha+\beta,s-r}d^{\alpha+\beta,-\alpha+s}(x_{2})x_{1}^{-1}\delta\left(\frac{x_{2}}{x_{1}}\right)
   -\delta_{\alpha+\beta,r-s}d^{\alpha+\beta,\alpha+s}(x_{2})x_{1}^{-1}\delta\left(\frac{x_{2}}{x_{1}}\right)
                         \nonumber\\
   &&-\delta_{\alpha-\beta,s-r}d^{\alpha-\beta,-\alpha+s}(x_{2})x_{1}^{-1}\delta\left(\frac{x_{2}}{x_{1}}\right)
   +\delta_{\alpha-\beta,r-s}d^{\alpha-\beta,\alpha+s}(x_{2})x_{1}^{-1}\delta\left(\frac{x_{2}}{x_{1}}\right)\nonumber\\
 && +\delta_{r,s}\left(\delta_{\alpha-\beta,0}-\delta_{\alpha+\beta,0}\right)
 \frac{\partial}{\partial x_{2}}x_{1}^{-1}\delta\left(\frac{x_{2}}{x_{1}}\right){\bf c} .  \label{eq:2.13}
\end{eqnarray}
Recall that for $\alpha,r\in \Z$,
$$d^{\alpha,r}(x)=\sum\limits_{n\in\mathbb{Z}}d^{\alpha,r}_{n}x^{-n-1},$$
 where $d^{\alpha,r}_{n} $ denotes $d^{\alpha,r}\otimes t^{n}.$

Let  $\ell$ be a complex number, which is fixed for the rest of this section.
Given the affine Lie algebra $\widehat{\mathfrak{D}}$ and the complex number $\ell$,
we have a $\Z$-graded vertex algebra $V_{\widehat{\mathfrak{D}}}(\ell,0)$.
 Recall that as the underlying space,
   $$ V_{\widehat{\mathfrak{D}}} (\ell,0)=
U(\widehat{\mathfrak{D}})\otimes_{U({\mathfrak{D}}\otimes \C[t]+\C {\bf c})}\mathbb{C}_{\ell},   $$
where $\mathbb{C}_{\ell}$ denotes the one-dimensional $({\mathfrak{D}}\otimes \C[t]+\C {\bf c})$-module $\C$ with ${\mathfrak{D}}\otimes \C[t]$
   acting trivially and with ${\bf c}$ acting as scalar $\ell$, and that $\textbf{1} = 1\otimes 1$,
$Y(d^{\alpha,r},x) = d^{\alpha,r}(x)$ for $\alpha,r\in \Z$.
Furthermore,
$V_{\widehat{\mathfrak{D}}}(\ell,0)_{(1)}={\mathfrak{D}}$
is a generating subspace of $V_{\widehat{\mathfrak{D}}}(\ell,0)$, where
${\mathfrak{D}}$ is identified as a subspace of $V_{\widehat{\mathfrak{D}}}(\ell,0)$ through the linear map
$a\in {\mathfrak{D}}\mapsto a(-1)\textbf{1}.$

Furthermore, we have:

\begin{lem}\label{automorphism1}
 For $m\in\mathbb{Z}$, there exists an automorphism $\sigma_{m}$
               of the $\Z$-graded vertex algebra $ V_{\widehat{\mathfrak{D}}}(\ell,0)$, which is uniquely determined by
               $\sigma_{m}(d^{\alpha,r})=d^{\alpha,m+r}$ for $\alpha,r\in\mathbb{Z}$.
\end{lem}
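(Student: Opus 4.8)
The plan is to deduce the statement from Lemma~\ref{lautomorphism} by first producing $\sigma_m$ as an automorphism of the Lie algebra ${\mathfrak{D}}$ that preserves the bilinear form $\la\cdot,\cdot\ra$, and then invoking the unique extension to $V_{\widehat{\mathfrak{D}}}(\ell,0)$. Since ${\mathfrak{D}}$ is presented by generators $d^{\alpha,r}$ and the relations $d^{-\alpha,r}=-d^{\alpha,r}$ together with (\ref{eq:2.11}), it suffices to check that the assignment $d^{\alpha,r}\mapsto d^{\alpha,m+r}$ on generators respects these relations; the inverse will then be furnished by the analogous assignment with $m$ replaced by $-m$, so that $\sigma_m$ is automatically a Lie algebra automorphism rather than merely an endomorphism.

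First I would verify well-definedness. The skew relation is immediate, since $d^{-\alpha,m+r}=-d^{\alpha,m+r}$. For the bracket relation the essential point is that (\ref{eq:2.11}) is invariant under the simultaneous translation $r\mapsto r+m,\ s\mapsto s+m$: every Kronecker delta there depends only on the difference $s-r$ or $r-s$, which is unchanged by such a shift, while each second index $-\alpha+s$ and $\alpha+s$ appearing on the right-hand side is shifted uniformly by $m$. Hence computing $[d^{\alpha,m+r},d^{\beta,m+s}]$ directly from (\ref{eq:2.11}) reproduces exactly $\sigma_m\big([d^{\alpha,r},d^{\beta,s}]\big)$, so $\sigma_m$ is a homomorphism, and together with the $m\mapsto -m$ inverse it is an automorphism of ${\mathfrak{D}}$.

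Next I would check that $\sigma_m$ preserves the form (\ref{ebilinear-form}): directly one has $\la \sigma_m(d^{\alpha,r}),\sigma_m(d^{\beta,s})\ra=\la d^{\alpha,m+r},d^{\beta,m+s}\ra=\delta_{m+r,m+s}\left(\delta_{\alpha-\beta,0}-\delta_{\alpha+\beta,0}\right)=\delta_{r,s}\left(\delta_{\alpha-\beta,0}-\delta_{\alpha+\beta,0}\right)=\la d^{\alpha,r},d^{\beta,s}\ra$, again because only the difference of the $t$-grading indices enters. With both properties in hand, Lemma~\ref{lautomorphism} applies verbatim, extending $\sigma_m$ uniquely to an automorphism of the $\Z$-graded vertex algebra $V_{\widehat{\mathfrak{D}}}(\ell,0)$, which I continue to denote $\sigma_m$; uniqueness of the extension is part of that lemma, since ${\mathfrak{D}}=V_{\widehat{\mathfrak{D}}}(\ell,0)_{(1)}$ generates the vertex algebra.

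There is essentially no serious obstacle: once one observes that the structure constants in (\ref{eq:2.11}) and the form (\ref{ebilinear-form}) depend on the pair $(r,s)$ only through its difference, the translation $r\mapsto r+m$ trivially preserves them, and all the real content rests on the general extension result Lemma~\ref{lautomorphism}. The only point that requires a little care is to verify well-definedness against the full relation (\ref{eq:2.11}) as part of the generators-and-relations description of ${\mathfrak{D}}$, rather than merely on the basis $\{d^{\alpha,r}\mid \alpha\ge 1\}$. As a cross-check one could instead transport the computation through the identification ${\mathfrak{D}}\cong{\mathcal{A}}^{\tau}$ of Lemma~\ref{identification}, under which $\sigma_m$ corresponds to the index-shift automorphism $E_{i,j}\mapsto E_{i+m,j+m}$ of $\mathfrak{gl}_{\infty}$; this map visibly preserves both the parity condition defining $\mathcal{A}$ and the involution $\tau$, hence restricts to ${\mathcal{A}}^{\tau}$ and preserves the inherited form, confirming the two properties above.
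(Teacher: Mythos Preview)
Your proposal is correct and follows essentially the same approach as the paper: define the shift $d^{\alpha,r}\mapsto d^{\alpha,m+r}$ on ${\mathfrak{D}}$, observe it is a form-preserving Lie algebra automorphism, and invoke Lemma~\ref{lautomorphism} to extend uniquely to $V_{\widehat{\mathfrak{D}}}(\ell,0)$. You simply supply more explicit verification of the relations and the form-invariance (and an optional cross-check via Lemma~\ref{identification}) where the paper says ``it can be readily seen.''
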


\begin{proof}  Let $m\in\mathbb{Z}$. Define a linear endomorphism $\sigma'_{m}$  of ${\mathfrak{D}}$
             by $\sigma'_{m}(d^{\alpha,r})=d^{\alpha,m+r}$ for $\alpha,r\in\mathbb{Z}$.
             It can be readily seen that $\sigma'_{m}$ is an automorphism of Lie algebra ${\mathfrak{D}}$, which preserves the bilinear form
             $\langle\cdot,\cdot\rangle$. By Lemma \ref{lautomorphism}, $\sigma'_{m}$ extends uniquely to an automorphism $\sigma_{m}$ of
             the $\Z$-graded vertex algebra $V_{\widehat{\mathfrak{D}}}(\ell,0)$.
\end{proof}

Set
\begin{eqnarray}
\Gamma=\{ \sigma_{m}\  | \  m\in \Z\}\subset {\mathrm{Aut}}(V_{\widehat{\mathfrak{D}}}(\ell,0)),
\end{eqnarray}
where $V_{\widehat{\mathfrak{D}}}(\ell,0)$ is viewed as a $\Z$-graded vertex algebra.
It can be readily seen that the map $\Z\ni m\mapsto \sigma_{m}\in \Gamma$ is a group isomorphism.
We define two group homomorphisms
$$R: \  \Gamma \rightarrow \ {\mathrm{GL}}(V_{\widehat{\mathfrak{D}}}(\ell,0))
\   \   \mbox{ and }\  \  \varphi: \  \Gamma\  \rightarrow \   \mathbb{C}^{\times}$$
 by
$$ \varphi(\sigma_{m}) = q^{m} \  \  \mbox{and}\  \  R_{\sigma_{m}} = \varphi(\sigma_{m})^{-L(0)}\sigma_{m}=q^{-mL(0)}\sigma_{m}
  \   \  \mbox{for}\;\; m\in\Z,$$
 where $L(0)$ is the linear operator on $V_{\widehat{\mathfrak{D}}}(\ell,0)$ defined by $L(0)v=nv$
 for $v\in V_{\widehat{\mathfrak{D}}}(\ell,0)_{(n)}$ with $n\in \mathbb{Z}$.
In view of Remark \ref{rgradedva-gammava},  equipped with group homomorphisms $R$ and $\varphi$,
the $\Z$-graded vertex algebra $V_{\widehat{\mathfrak{D}}}(\ell,0)$ becomes a $\Gamma$-vertex algebra.

For the rest of this section, we consider $V_{\widehat{\mathfrak{D}}}(\ell,0)$ as a $\Gamma$-vertex algebra with $\Gamma=\Z$ as above.
Now, we are in a position to present our main result of this section.

\begin{thm}\label{quasi-main}
 Let $W$ be a restricted $D$-module of level $\ell$. Then there exists a quasi
                 $V_{\widehat{\mathfrak{D}}}(\ell,0)$-module
                 structure $Y_{W}(\cdot,x)$ on $W$, which is uniquely determined by
                 $$Y_{W}(d^{\alpha,r},x) = D^{\alpha,r}(x)=q^{r}\tilde{D}^{\alpha}(q^{r}x)\  \
                 \mbox{ for }\alpha,r\in\mathbb{Z}.$$
                 On the other hand, let $(W,Y_{W})$ be a quasi $V_{\widehat{\mathfrak{D}}}(\ell,0)$-module.
                     Then  $W$ is a restricted $D$-module of level $\ell$
                  with $\tilde{D}^{\alpha}(x)=Y_{W}(d^{\alpha,0},x)$
                  for $\alpha\in\mathbb{Z}.$
 \end{thm}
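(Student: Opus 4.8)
The plan is to establish the two directions separately, routing the forward direction (a restricted $D$-module yields a quasi module) through the conceptual construction of Theorem~\ref{thm.2.4} and then pulling the resulting structure back along a vertex algebra homomorphism out of $V_{\widehat{\mathfrak{D}}}(\ell,0)$; the converse is a more direct unwinding of the quasi-module axioms.

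For the forward direction, let $W$ be a restricted $D$-module of level $\ell$ and set $\widetilde{U}_{W}=\{\,D^{\alpha,r}(x)=q^{r}\tilde{D}^{\alpha}(q^{r}x)\mid\alpha,r\in\Z\,\}\subseteq\mathcal{E}(W)$. First I would read off from the commutator (\ref{eq:2.9}) that $\widetilde{U}_{W}$ is $\varphi(\Gamma)$-local (with $\varphi(\Gamma)=\{q^{m}\mid m\in\Z\}\subseteq\C^{\times}$): every term on its right-hand side is a delta function $x_{1}^{-1}\delta(q^{a}x_{2}/x_{1})$ or its $x_{2}$-derivative, so multiplying by a suitable polynomial in the factors $x_{1}-q^{a}x_{2}$ annihilates the whole commutator. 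A short computation gives $\overline{R}_{q}(D^{\alpha,r}(x))=D^{\alpha,r}(qx)=q^{-1}D^{\alpha,r+1}(x)$, so $\mathrm{span}\,\widetilde{U}_{W}$ is $\overline{R}$-stable; hence by Theorem~\ref{thm.2.4}, $\langle\widetilde{U}_{W}\rangle=\langle\widetilde{U}_{W}\rangle_{\varphi(\Gamma)}$ is a vertex algebra (indeed a $\varphi(\Gamma)$-vertex algebra via $\overline{R}$, by \cite{Li1,Li2}) on which $W$ is a quasi module with $Y_{W}(a(x),z)=a(z)$.

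The core step is to produce a vertex algebra homomorphism $\psi\colon V_{\widehat{\mathfrak{D}}}(\ell,0)\to\langle\widetilde{U}_{W}\rangle$ with $\psi(d^{\alpha,r})=D^{\alpha,r}(x)$, after which $W$ becomes a quasi $V_{\widehat{\mathfrak{D}}}(\ell,0)$-module via $v\mapsto Y_{W}(\psi(v),x)$. To build $\psi$ I would invoke the universal property of $V_{\widehat{\mathfrak{D}}}(\ell,0)$, for which it suffices to check that inside $\langle\widetilde{U}_{W}\rangle$ the elements $D^{\alpha,r}(x)$ satisfy the affine relation (\ref{eq:2.13}) at level $\ell$, i.e. that their $\mathcal{Y}_{1}$-products reproduce the structure constants of $\widehat{\mathfrak{D}}$; this amounts to extracting the diagonal ($x_{1}=x_{2}$) part of (\ref{eq:2.9}). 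Concretely, the term $x_{1}^{-1}\delta(q^{-\alpha+s}x_{2}/(q^{\beta+r}x_{1}))=x_{1}^{-1}\delta(q^{(s-r)-(\alpha+\beta)}x_{2}/x_{1})$ is supported on the diagonal precisely when $\alpha+\beta=s-r$, which is the Kronecker delta $\delta_{\alpha+\beta,s-r}$ of (\ref{eq:2.13}); the three remaining field-terms and the central term match in the same way, the latter yielding $\langle d^{\alpha,r},d^{\beta,s}\rangle\ell=\delta_{r,s}(\delta_{\alpha-\beta,0}-\delta_{\alpha+\beta,0})\ell$. Granting this, $\psi$ exists and is determined on the generators, and $\psi R_{\sigma_{m}}=\overline{R}_{q^{m}}\psi$ (both sides send $d^{\alpha,r}$ to $q^{-m}D^{\alpha,r+m}(x)$), which delivers the equivariance $Y_{W}(R_{\sigma_{m}}v,x)=Y_{W}(v,q^{m}x)$ together with quasi-locality with shifts in $\varphi(\Gamma)$, so $W$ is indeed a quasi $V_{\widehat{\mathfrak{D}}}(\ell,0)$-module; uniqueness of $Y_{W}$ holds since the $d^{\alpha,r}$ generate. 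I expect this diagonal-extraction step to be the main obstacle, as it is where the $q$-shifted deltas of (\ref{eq:2.9}) must be correctly separated into genuine vertex-algebra ($\mathcal{Y}_{1}$) data versus off-diagonal quasi-locality data.

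For the converse, let $(W,Y_{W})$ be a quasi $V_{\widehat{\mathfrak{D}}}(\ell,0)$-module and set $\tilde{D}^{\alpha}(x):=Y_{W}(d^{\alpha,0},x)\in\mathcal{E}(W)$, so restrictedness is automatic from truncation in $\mathrm{Hom}(W,W((x)))$. Then $d^{-\alpha,0}=-d^{\alpha,0}$ gives $\tilde{D}^{-\alpha}(x)=-\tilde{D}^{\alpha}(x)$, and applying the equivariance $Y_{W}(R_{\sigma_{m}}v,x)=Y_{W}(v,\varphi(\sigma_{m})x)$ to $v=d^{\alpha,0}$ (of $L(0)$-degree $1$, so $R_{\sigma_{m}}d^{\alpha,0}=q^{-m}d^{\alpha,m}$) forces $Y_{W}(d^{\alpha,r},x)=q^{r}\tilde{D}^{\alpha}(q^{r}x)=D^{\alpha,r}(x)$ for all $r$. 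Finally I would feed the affine products $(d^{\alpha,r})_{0}d^{\beta,s}=[d^{\alpha,r},d^{\beta,s}]$ and $(d^{\alpha,r})_{1}d^{\beta,s}=\langle d^{\alpha,r},d^{\beta,s}\rangle\ell\,{\bf 1}$, read off from (\ref{eq:2.13}), into the commutator formula for quasi modules over a $\Gamma$-vertex algebra (\cite{Li2}); the sum over $\Gamma$ reinstates the $q$-shifts and reproduces precisely (\ref{eq:2.9}). By Lemma~\ref{tildeD-characterization} this is equivalent to (\ref{eDbracket-new}), hence via Proposition~\ref{psecond-def-D} to the defining relations of $D$ with ${\bf c}$ acting as $\ell$; thus $W$ is a restricted $D$-module of level $\ell$, completing the equivalence.
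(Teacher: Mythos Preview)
Your proposal is correct and follows a route genuinely different from the paper's. The paper does not carry out the conceptual construction of Theorem~\ref{thm.2.4} directly; instead it identifies $D$ with the $\Gamma$-covariant Lie algebra $\widehat{\mathfrak{D}}/\Gamma$ of \cite{Li2} (computing the bracket $[\cdot,\cdot]_{\Gamma}$ on $\widehat{\mathfrak{D}}$ modulo the ideal $J_{\Gamma}$ and matching it with Proposition~\ref{psecond-def-D}), and then invokes Theorem~4.9 of \cite{Li2} as a black box to obtain both directions at once. Your approach instead builds everything by hand: quasi-locality and $\overline{R}$-stability of $\widetilde{U}_{W}$, the vertex algebra $\langle\widetilde{U}_{W}\rangle$, the homomorphism $\psi$ via the diagonal extraction of (\ref{eq:2.9}), equivariance, and the commutator formula for the converse. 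This is essentially the strategy the paper itself adopts in Section~3 for the $\phi$-coordinated case (Theorems~\ref{phi-module-part1} and \ref{phi-module-D}), so your argument is in good company. What the covariant-algebra route buys is economy---once $D\cong\widehat{\mathfrak{D}}/\Gamma$ is checked, a single cited theorem delivers existence, equivariance, $\varphi(\Gamma)$-locality, and the converse simultaneously---whereas your route is more self-contained and makes the underlying mechanism (that the Kronecker deltas in (\ref{eq:2.11}) and (\ref{ebilinear-form}) are precisely the ``on-diagonal'' pieces of the $q$-shifted deltas in (\ref{eq:2.9})) completely transparent. Your flagged ``main obstacle,'' the computation of the $\mathcal{Y}_{1}$-products $D^{\alpha,r}(x)_{(1,n)}D^{\beta,s}(x)$, is handled in \cite{Li1} (e.g.\ Lemma~6.6 there) exactly as you describe: since $q$ is not a root of unity, each $q$-shifted delta in (\ref{eq:2.9}) contributes to the $(1,n)$-product if and only if its exponent vanishes, reproducing (\ref{eq:2.13}) on the nose.
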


\begin{proof} We shall apply Theorem 4.9 of \cite{Li2} by showing that Lie algebra $D$ is isomorphic to the $\Gamma$-covariant algebra of
$\widehat{\mathfrak{D}}$ as defined therein. Recall that
$$\Gamma=\{ \sigma_{m} \  | \ m\in \Z\}\subset {\mathrm{Aut}}({\mathfrak{D}},\langle\cdot,\cdot\rangle),$$
where $\sigma_{m}(d^{\alpha,r})=d^{\alpha,r+m}$ for $m,\alpha,r\in\Z$.
The group $\Gamma$ is canonically isomorphic to $\Z$ with $\sigma_{m}$ corresponding to $m$.
For any given $\alpha,\beta,r,s\in \Z$, from (\ref{eq:2.11}) we see that
$$[\sigma_{m}(d^{\alpha,r}),d^{\beta,s}]=[d^{\alpha,r+m},d^{\beta,s}]= 0$$
for $m\ne \pm (\alpha+\beta)-r+s,\  \pm (\alpha-\beta)-r+s$, and from (\ref{ebilinear-form}) we see that
$$\langle \sigma_{m}(d^{\alpha,r}),d^{\beta,s}\rangle=\langle d^{\alpha,r+m},d^{\beta,s}\rangle= 0$$
for $m\ne s-r$. We are given the linear character $\chi: \Gamma\rightarrow \C^{\times}$ given by
$\chi (\sigma_{m})=q^{m}$ for $m\in \Z$. From \cite{Li2} (Proposition 4.4), we have a Lie algebra $\widehat{\mathfrak{D}}/\Gamma$,
which is constructed as follows: Define a new operation $[\cdot,\cdot]_{\Gamma}$ on the vector space
$$\widehat{\mathfrak{D}}={\mathfrak{D}}\otimes \C[t,t^{-1}]\oplus \C {\bf c}$$ by
$$[a\otimes t^{m}+\lambda {\bf c},b\otimes t^{n}+\mu {\bf c}]_{\Gamma}
=\sum_{k\in \Z}q^{mk}\left([\sigma_{k}(a),b]\otimes t^{m+n}+m\delta_{m+n,0}\langle \sigma_{k}(a),b\rangle {\bf c}\right)$$
for $a,b\in {\mathfrak{D}},\ m,n\in \Z,\ \lambda,\mu\in \C$. Let $J_{\Gamma}$  be the linear span of the elements
$$\sigma_{m} (a)\otimes t^{n}-q^{-mn}(a\otimes t^{n}) \   \  \  \mbox{ for }a\in {\mathfrak{D}},\ m,n\in \Z.$$
It was proved that $J_{\Gamma}$ is a two-sided ideal of the non-associative algebra $(\widehat{\mathfrak{D}},[\cdot,\cdot]_{\Gamma})$ and
the quotient algebra $\widehat{\mathfrak{D}}/J_{\Gamma}$ is a Lie algebra. This particular Lie algebra
 is called the $\Gamma$-covariant Lie algebra of the affine Lie algebra $\widehat{\mathfrak{D}}$,
 denoted by $\widehat{\mathfrak{D}}/\Gamma$. For $\alpha,m,n\in \Z$, set
$$\overline{d}^{\alpha,m}(n)=
d^{\alpha,m}\otimes t^{n}+J_{\Gamma}\in \widehat{\mathfrak{D}}/\Gamma.$$
We have $\overline{d^{-\alpha,m}(n)}=-\overline{d^{\alpha,m}(n)}$ and
$$\overline{d^{\alpha,m+r}}(x)=\sum_{n\in \Z}\overline{(\sigma_{m}d^{\alpha,r})(n)}x^{-n-1}
=\sum_{n\in \Z}q^{-nm}\overline{d^{\alpha,r}(n)}x^{-n-1}=q^{m}\overline{d}^{\alpha,r}(q^{m}x).  $$
Furthermore, using (\ref{eq:2.11}) and (\ref{ebilinear-form}) we get
\begin{eqnarray*}
&&[\overline{d}^{\alpha,r}(x_{1}),\overline{d}^{\beta,s}(x_{2})]\nonumber\\
&=&\sum_{m,n\in \Z}\sum_{k\in \Z}q^{km}\left(\overline{[\sigma_{k}(d^{\alpha,r}),d^{\beta,s}](m+n)}
+m\delta_{m+n,0}\langle \sigma_{k}(d^{\alpha,r}),d^{\beta,s}\rangle {\bf c}\right)x_{1}^{-m-1}x_{2}^{-n-1}\nonumber\\
& = &\overline{d}^{\alpha+\beta,-\alpha+s}(x_{2})x_{1}^{-1}\delta\left(\frac{x_{2}}{q^{\alpha+\beta+r-s}x_{1}}\right)
              -\overline{d}^{\alpha+\beta,\alpha+s}(x_{2})x_{1}^{-1}\delta\left(\frac{x_{2}}{q^{-\alpha-\beta+r-s}x_{1}}\right)
                                                              \nonumber\\
         &&{} -\overline{d}^{\alpha-\beta,-\alpha+s}(x_{2})x_{1}^{-1}\delta\left(\frac{x_{2}}{q^{\alpha-\beta+r-s}x_{1}}\right)
         +\overline{d}^{\alpha-\beta,\alpha+s}(x_{2})x_{1}^{-1}\delta\left(\frac{x_{2}}{q^{\beta-\alpha+r-s}x_{1}}\right)
                                                                   \nonumber\\
       &&{} +\left(\delta_{\alpha-\beta,0}-\delta_{\alpha+\beta,0} \right)\frac{\partial}{\partial x_{2}}
x_{1}^{-1}\delta\left(\frac{q^{s-r}x_{2}}{x_{1}}\right){\bf c}
\end{eqnarray*}
for $\alpha,\beta,r,s\in \Z$.
Then it follows from Proposition \ref{psecond-def-D} that $D$ is isomorphic to $\widehat{\mathfrak{D}}/\Gamma$ with
$D^{\alpha,m}(n)$ corresponding to $\overline{d^{\alpha,m}(n)}$ for $\alpha,m,n\in \Z$.

Notice that $\chi$ is one-to-one as $q$ is not a root of unity  by assumption.
Then all the assertions follow immediately from Theorem 4.9 of \cite{Li2}.
\end{proof}

\section{ Associating $q$-Virasoro algebra with vertex algebras in terms of $\mathbb{Z}$-equivariant $\phi$-coordinated quasi modules}
\def\theequation{3.\arabic{equation}}
\setcounter{equation}{0}

In this section, we prove that for any complex number $\ell$,  the category of restricted $D$-modules of level $\ell$ is isomorphic to that of
$\mathbb{Z}$-equivariant $\phi$-coordinated quasi modules for the vertex algebra $V_{\widehat{\mathfrak{D}}}(\ell,0)$
that was introduced in Section 2.

We first recall some basic notions and results on
$G$-equivariant $\phi$-coordinated quasi modules for vertex algebras from \cite{Li4} and \cite{Li5}.
Set
$$\phi=\phi(x,z)=xe^{z}\in\mathbb{C}[[x,z]],$$
 which is fixed throughout this section.

\begin{definition} {\em Let $V$ be a vertex algebra.
               A {\em $\phi$-coordinated quasi $V$-module} is a vector space $W$ equipped with a
               linear map
               $$Y_{W}(\cdot,x): V\longrightarrow \mathrm{Hom}(W , W((x)))\subset\mathrm{End W}[[x,x^{-1}]],$$
               satisfying the conditions that $Y_{W}(\textbf{1},x)=\textbf{1}_{W}$
               and that for $u,v\in V$, there exists a nonzero polynomial $p(x)$
               such that
               $$ p(x_{1}/x_{2})Y_{W}(u,x_{1})Y_{W}(v,x_{2})\in \mathrm{Hom}(W,W((x_{1},x_{2})))$$
               and
               $$p(e^{z})Y_{W}(Y(u,z)v,x_{2})=(p(x_{1}/x_{2})Y_{W}(u,x_{1})Y_{W}(v,x_{2}))|_{x_{1}=x_{2}e^{z}}.$$}
\end{definition}

\begin{definition}
{\em Let $V$ be a
               vertex algebra and let $G$ be an automorphism group
              equipped with a linear character $\chi: G\longrightarrow \mathbb{C}^{\times}.$
              A {\em G-equivariant $\phi$-coordinated quasi $V$-module} is a
              $\phi$-coordinated quasi $V$-module $(W, Y_{W})$, satisfying the conditions that
              $$Y_{W}(gv,x)=Y_{W}(v,\chi(g)x) \;\;\mbox{for}\;g\in G,\ v\in V,$$
              and that for $u,v\in V$, there exists $p(x)\in\mathbb{C}[x]$ with only zeroes
              in $\chi(G)$ such that
              $$p(x_{1}/x_{2})Y_{W}(u,x_{1})Y_{W}(v,x_{2})\in \mathrm{Hom}(W,W((x_{1},x_{2}))).$$}
\end{definition}

As we need, we also recall the conceptual construction of vertex algebras and their $\phi$-coordinated quasi modules from \cite{Li4}.
Let $W$ be a general vector space.
Let $a(x),b(x)\in\mathcal{E}(W)$. Assume that there exists a nonzero polynomial $p(x)$
                such that
\begin{eqnarray}
p(x/z)a(x)b(z)\in \mathrm{Hom}(W,W((x,z))).  \qquad\;\label{eq:3.1}
\end{eqnarray}
Define $a(x)_{n}^{e}b(x)\in \mathcal{E}(W)$ for $n\in\mathbb{Z}$ in terms of
generating function
$$Y_{\mathcal{E}}^{e}(a(x),z)b(x) = \sum_{n\in\mathbb{Z}}(a(x)_{n}^{e}b(x))z^{-n-1}$$
by
$$Y_{\mathcal{E}}^{e}(a(x),z)b(x) = p(e^{z})^{-1}(p(x_{1}/x)a(x_{1})b(x))|_{x_{1}=xe^{z}}, $$
where $p(x)$ is any nonzero polynomial such that (\ref{eq:3.1}) holds and $p(e^{z})^{-1}$
denotes the inverse of $p(e^{z})$ in $\mathbb{C}((z)).$ (Note that $p(e^{z})$ is a nonzero element of $\C[[z]]$.)

A subspace $U$ of $\mathcal{E}(W)$ is said to be {\em $Y_{\mathcal{E}}^{e}$-closed}
if $a(x)_{n}^{e}b(x)\in U$ for $a(x),b(x)\in U$, $n\in\mathbb{Z}$.
We denote by $\langle U\rangle_{e}$ the smallest $Y_{\mathcal{E}}^{e}$-closed
subspace which contains $U$ and $\textbf{1}_{W}.$

The following result was obtained  in \cite{Li4} (Theorem 5.4 and Proposition 5.3):

\begin{thm}\label{thm3.3}
Let $U$ be a quasi local subset of $\mathcal{E}(W)$. Then
  $(\langle U\rangle_{e},Y_{\mathcal{E}}^{e},1_{W})$ carries the structure of a vertex algebra and  $W$ is a
  $\phi$-coordinated quasi $\langle U\rangle_{e}$-module with
 $Y_{W}(a(x),z) = a(z)$
  for $a(x)\in\langle U\rangle_{e}.$
\end{thm}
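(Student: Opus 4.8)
The plan is to establish this as the $\phi$-coordinated analogue of Theorem~\ref{thm.2.4}, adapting the bootstrap construction of \cite{Li1} to the exponential change of variable $\phi(x,z)=xe^{z}$. The first task is to check that the operation $Y_{\mathcal{E}}^{e}$ is well defined. For mutually quasi local $a(x),b(x)\in\mathcal{E}(W)$, I would fix a nonzero polynomial $p(x)$ with $p(x_{1}/x)a(x_{1})b(x)\in\mathrm{Hom}(W,W((x_{1},x)))$, and observe that the substitution $x_{1}=xe^{z}$ is meaningful on this space, landing in $\mathcal{E}(W)((z))$. Since $p(e^{z})$ is a nonzero element of $\mathbb{C}[[z]]$, it is invertible in $\mathbb{C}((z))$, so $Y_{\mathcal{E}}^{e}(a(x),z)b(x)$ is a well-defined element of $\mathcal{E}(W)((z))$; a short computation then shows independence of the choice of $p$ by reducing any two admissible choices to a common multiple.

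Next I would show that quasi locality is preserved under $Y_{\mathcal{E}}^{e}$, so that $\langle U\rangle_{e}$ is a genuine quasi local, $Y_{\mathcal{E}}^{e}$-closed subspace, linearly spanned by the iterated products $a^{(1)}(x)_{n_{1}}^{e}\cdots a^{(r)}(x)_{n_{r}}^{e}\mathbf{1}_{W}$. This requires an iterate/commutator formula in the new variable and is the technical heart of the closure argument. The vacuum and creation properties are then direct: $Y_{\mathcal{E}}^{e}(\mathbf{1}_{W},z)=\mathbf{1}_{W}$ is immediate from the definition, and $Y_{\mathcal{E}}^{e}(a(x),z)\mathbf{1}_{W}\in\mathcal{E}(W)[[z]]$ with $z\to 0$ limit $a(x)$ follows because for $b(x)=\mathbf{1}_{W}$ one may take $p=1$.

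The main obstacle is the Jacobi identity for $(\langle U\rangle_{e},Y_{\mathcal{E}}^{e},\mathbf{1}_{W})$. Unlike the additive substitution $x_{1}=x_{2}+x_{0}$ of the ordinary theory, here the relevant substitution is multiplicative, $x_{1}=x_{2}e^{z_{0}}$, so the standard delta-function identities must be re-derived in this setting. I would prove it in two stages: first weak commutativity, which is precisely the quasi locality established above, and then weak associativity, relating $Y_{\mathcal{E}}^{e}(Y_{\mathcal{E}}^{e}(a,z_{0})b,z)c$ to $Y_{\mathcal{E}}^{e}(a,z_{0}+z)Y_{\mathcal{E}}^{e}(b,z)c$. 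The crucial input is the group-like identity $\phi(\phi(x,z_{0}),z)=\phi(x,z_{0}+z)$, i.e.\ $(xe^{z_{0}})e^{z}$ composes correctly, which is exactly why $xe^{z}$ is the right choice of $\phi$; this lets the nested substitutions collapse so that the expansions of $e^{z_{0}}$, $e^{z}$ and $e^{z_{0}+z}$ are compatible. Once weak commutativity and weak associativity are in hand, the full Jacobi identity follows by the general machinery of \cite{Li1}.

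Finally, for the module assertion I would verify that $W$, equipped with $Y_{W}(a(x),z)=a(z)$, is a $\phi$-coordinated quasi $\langle U\rangle_{e}$-module. The compatibility condition $p(e^{z})Y_{W}(Y_{\mathcal{E}}^{e}(a,z)b,x_{2})=(p(x_{1}/x_{2})Y_{W}(a,x_{1})Y_{W}(b,x_{2}))|_{x_{1}=x_{2}e^{z}}$ holds essentially by the very definition of $Y_{\mathcal{E}}^{e}$, while the quasi locality requirement imposed on $Y_{W}$ is inherited from the quasi locality of $U$. Assembling these pieces would complete the proof.
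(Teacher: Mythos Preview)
The paper does not prove this theorem at all: it is quoted verbatim as a result already established in \cite{Li4} (Theorem~5.4 and Proposition~5.3), and the present paper merely invokes it. So there is no ``paper's own proof'' to compare against; your proposal is an outline of how the cited result is proved in \cite{Li4}, not an alternative to anything done here.

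That said, your sketch is a reasonable summary of the strategy in \cite{Li4}. A couple of small corrections: the weak associativity step and the passage from weak commutativity plus weak associativity to the full Jacobi identity should be attributed to the $\phi$-coordinated machinery of \cite{Li4}, not \cite{Li1}; the additive theory in \cite{Li1} is only the model, and the actual delta-function calculus in the multiplicative variable $x_{1}=x_{2}e^{z}$ has to be redone there. Also, your associativity identity should read $Y_{\mathcal{E}}^{e}(a,z_{0}+z)$ in the sense of the composite substitution $x_{1}=xe^{z_{0}+z}$ rather than a naive iterate of module maps, and making this precise (in particular controlling the domains so that the substitution $x_{1}=xe^{z}$ lands in the right completion) is where the real work in \cite{Li4} lies; your outline glosses over this. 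For the purposes of the present paper, though, simply citing \cite{Li4} is what is expected.
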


We next associate the $q$-Virasoro algebra $D$ with a vertex algebra $V_{\widehat{\mathfrak{D}}}(\ell,0)$  and its
$\mathbb{Z}$-equivariant $\phi$-coordinated quasi modules.
First of all, we modify the generating functions by a shift. For $\alpha\in \Z$, set
\begin{eqnarray}
\overline{D}^{\alpha}(x) =xD^{\alpha}(x)= \sum_{n\in\mathbb{Z}}D^{\alpha}(n)x^{-n}.
\end{eqnarray}
Then the defining commutation relations of $D$ are equivalent to
\begin{eqnarray}
 [\overline{D}^{\alpha}(x_{1}),\overline{D}^{\beta}(x_{2})]  &=&
 \sum_{m,n\in\mathbb{Z}}[D^{\alpha}(n),D^{\beta}(m)]x_{1}^{-n}x_{2}^{-m}
                                                            \nonumber\\
&=&\overline{D}^{\alpha+\beta}(q^{-\alpha}x_{2})\delta\left(\frac{q^{-\alpha}x_{2}}{q^{\beta}x_{1}}\right)
 -\overline{D}^{\alpha+\beta}(q^{\alpha}x_{2})\delta\left(\frac{q^{\alpha}x_{2}}{q^{-\beta}x_{1}}\right)
                                                            \nonumber\\
&&{} -\overline{D}^{\alpha-\beta}(q^{-\alpha}x_{2})\delta\left(\frac{q^{-\alpha}x_{2}}{q^{-\beta}x_{1}}\right)
+\overline{D}^{\alpha-\beta}(q^{\alpha}x_{2})\delta\left(\frac{q^{\alpha}x_{2}}{q^{\beta}x_{1}}\right)
                                                               \nonumber\\
&&{} + \frac{1}{q^{-\alpha-\beta}-q^{\alpha+\beta}}
\left[\delta\left(\frac{q^{\alpha}x_{2}}{q^{-\beta}x_{1}}\right)
-\delta\left(\frac{q^{-\alpha}x_{2}}{q^{\beta}x_{1}}\right)\right]{\bf c}
                                                     \nonumber\\
&&{} -\frac{1}{q^{\beta-\alpha}-q^{\alpha-\beta}}
 \left[\delta\left(\frac{q^{\alpha}x_{2}}{q^{\beta}x_{1}}\right)
  -\delta\left(\frac{q^{-\alpha}x_{2}}{q^{-\beta}x_{1}}\right)\right]{\bf c}, \qquad\qquad\qquad\qquad\qquad\quad \label{eq:3.2}
   \end{eqnarray}
      where as before it is understood that
   \begin{eqnarray*}
 &&  \frac{1}{q^{-\alpha-\beta}-q^{\alpha+\beta}}
\left[\delta\left(\frac{q^{\alpha+\beta}x_{2}}{x_{1}}\right)
-\delta\left(\frac{q^{-\alpha-\beta}x_{2}}{x_{1}}\right)\right]
=-x_{2}\frac{\partial}{\partial x_{2}}\delta\left(\frac{x_{2}}{x_{1}}\right),\\
&&  \frac{1}{q^{\beta-\alpha}-q^{\alpha-\beta}}
\left[\delta\left(\frac{q^{\alpha-\beta}x_{2}}{x_{1}}\right)
-\delta\left(\frac{q^{\beta-\alpha}x_{2}}{x_{1}}\right)\right]
=-x_{2}\frac{\partial}{\partial x_{2}}\delta\left(\frac{x_{2}}{x_{1}}\right)
   \end{eqnarray*}
 for $\alpha+\beta=0$ and for $\alpha-\beta=0$, respectively. (Recall Remarks \ref{rlimit-cases} and \ref{rlimit2}.)

\begin{definition}
 {\em For $\alpha\in \Z$, we set
 \begin{eqnarray}
\hat{D}^{\alpha}(x)=x\tilde{D}^{\alpha}(x)=\begin{cases}xD^{\alpha}(x)&\ \ \mbox{ if }\alpha=0\\
xD^{\alpha}(x)-\frac{1}{q^{-\alpha}-q^{\alpha}}{\bf c}&\ \ \mbox{ if }\alpha\ne 0.
\end{cases}
 \end{eqnarray}}
   \end{definition}

Immediately from Lemma \ref{tildeD-characterization}, we have:

\begin{lem}\label{hatD-relations} The defining relations of $D$ are equivalent to that
$$[{\bf c},D]=0, \  \  \  \   \hat{D}^{-\alpha}(x)=-\hat{D}^{\alpha}(x),$$
 and
\begin{eqnarray}\label{emodified}
 [\hat{D}^{\alpha}(x_{1}),\hat{D}^{\beta}(x_{2})]
&=&\hat{D}^{\alpha+\beta}(q^{-\alpha}x_{2})\delta\left(\frac{q^{-\alpha}x_{2}}{q^{\beta}x_{1}}\right)
 -\hat{D}^{\alpha+\beta}(q^{\alpha}x_{2})\delta\left(\frac{q^{\alpha}x_{2}}{q^{-\beta}x_{1}}\right)
                                                            \nonumber\\
&&{} -\hat{D}^{\alpha-\beta}(q^{-\alpha}x_{2})\delta\left(\frac{q^{-\alpha}x_{2}}{q^{-\beta}x_{1}}\right)
+\hat{D}^{\alpha-\beta}(q^{\alpha}x_{2})\delta\left(\frac{q^{\alpha}x_{2}}{q^{\beta}x_{1}}\right)
                                                               \nonumber\\
&&{} +\left(\delta_{\alpha-\beta,0}-\delta_{\alpha+\beta,0}\right)x_{2}\frac{\partial}{\partial x_{2}}
\delta\left(\frac{x_{2}}{x_{1}}\right){\bf c}
   \end{eqnarray}
   for $\alpha,\beta\in \Z$.
\end{lem}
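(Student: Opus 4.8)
The plan is to deduce the lemma directly from Lemma~\ref{tildeD-characterization} by exploiting the relation $\hat{D}^{\alpha}(x)=x\tilde{D}^{\alpha}(x)$, so that the whole argument reduces to multiplying the relations of Lemma~\ref{tildeD-characterization} through by $x_{1}x_{2}$ and repackaging the result. Since $\hat{D}^{\alpha}(x)=x\tilde{D}^{\alpha}(x)$ and conversely $\tilde{D}^{\alpha}(x)=x^{-1}\hat{D}^{\alpha}(x)$, passing between the two families of generating functions is a bijection and multiplication by $x_{1}x_{2}$ is invertible (its inverse being multiplication by $x_{1}^{-1}x_{2}^{-1}$); hence any identity I derive will automatically be an equivalence, matching the ``equivalent to'' in the statement. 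The skew-symmetry $\hat{D}^{-\alpha}(x)=-\hat{D}^{\alpha}(x)$ and the centrality $[{\bf c},D]=0$ are then immediate: the former is $x$ times $\tilde{D}^{-\alpha}(x)=-\tilde{D}^{\alpha}(x)$, and the latter is built into the definition of $D$.

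For the bracket relation, I would start from $[\hat{D}^{\alpha}(x_{1}),\hat{D}^{\beta}(x_{2})]=x_{1}x_{2}[\tilde{D}^{\alpha}(x_{1}),\tilde{D}^{\beta}(x_{2})]$ and multiply the right-hand side of (\ref{eDbracket-new}) by $x_{1}x_{2}$ term by term. Each of the four non-central terms has the shape $q^{\pm\alpha}\tilde{D}^{\gamma}(q^{\pm\alpha}x_{2})\,x_{1}^{-1}\delta(\cdots)$ with matching signs; multiplying by $x_{1}x_{2}$ cancels the $x_{1}^{-1}$ against $x_{1}$ and turns $q^{\pm\alpha}x_{2}\tilde{D}^{\gamma}(q^{\pm\alpha}x_{2})$ into $\hat{D}^{\gamma}(q^{\pm\alpha}x_{2})$, since $\hat{D}^{\gamma}(y)=y\tilde{D}^{\gamma}(y)$ with $y=q^{\pm\alpha}x_{2}$. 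After rewriting $q^{-\alpha-\beta}x_{2}/x_{1}=q^{-\alpha}x_{2}/(q^{\beta}x_{1})$ and similarly for the other three delta arguments, these four terms match exactly the first two lines of (\ref{emodified}).

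The one step requiring genuine (though still routine) care is the central term, and I expect that to be the only real obstacle. Here I would use the formal-variable identity
$$x_{1}x_{2}\,\frac{\partial}{\partial x_{2}}\left(x_{1}^{-1}\delta\left(\frac{x_{2}}{x_{1}}\right)\right)=x_{2}\frac{\partial}{\partial x_{2}}\delta\left(\frac{x_{2}}{x_{1}}\right),$$
which I would verify by expanding $x_{1}^{-1}\delta(x_{2}/x_{1})=\sum_{n\in\Z}x_{2}^{n}x_{1}^{-n-1}$ and comparing coefficients, both sides reducing to $\sum_{n\in\Z}nx_{2}^{n}x_{1}^{-n}$. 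Multiplying the central term $(\delta_{\alpha-\beta,0}-\delta_{\alpha+\beta,0})\frac{\partial}{\partial x_{2}}x_{1}^{-1}\delta(x_{2}/x_{1}){\bf c}$ of (\ref{eDbracket-new}) by $x_{1}x_{2}$ then produces precisely the central term of (\ref{emodified}). Collecting the five terms yields (\ref{emodified}), and reversing the multiplication by $x_{1}^{-1}x_{2}^{-1}$ recovers (\ref{eDbracket-new}), which establishes the equivalence in both directions.
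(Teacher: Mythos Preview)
Your proposal is correct and is exactly the approach the paper takes: the paper states the lemma with the single remark ``Immediately from Lemma~\ref{tildeD-characterization}, we have,'' and your argument---multiplying (\ref{eDbracket-new}) by $x_{1}x_{2}$ and using $\hat{D}^{\gamma}(y)=y\tilde{D}^{\gamma}(y)$ together with the delta-function identity---is precisely the routine computation that justifies this. Your handling of the central term via the identity $x_{1}x_{2}\,\partial_{x_{2}}\bigl(x_{1}^{-1}\delta(x_{2}/x_{1})\bigr)=x_{2}\partial_{x_{2}}\delta(x_{2}/x_{1})$ is correct and is the only non-obvious step.
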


Recall from Section 2 that
$$\Gamma=\{ \sigma_{m} \  |  \  m\in \Z\}\subset \Aut (V_{\widehat{\mathfrak{D}}}(\ell,0)),$$
where
$$\sigma_{m}(d^{\alpha,r})=d^{\alpha,r+m}\  \  \  \mbox{ for }\alpha,r, m\in \Z.$$
We now view $\mathbb{Z}$ as an automorphism group of $ V_{\widehat{\mathfrak{D}}}(\ell,0)$ by identifying $m$ with $\sigma_{m}$ for $m\in\mathbb{Z}$.
Define a linear character $\chi_{q}: \mathbb{Z} \rightarrow \mathbb{C}^{\times}$ by
\begin{eqnarray*}
 \chi_{q}(m)=q^{m} \quad\textmd{for}\; m\in\mathbb{Z}.
\end{eqnarray*}

\begin{definition}  {\em A subset $U$ of $\mathcal{E}(W)$ is said to be {\em $\chi_{q}(\mathbb{Z})$-quasi local}
                 if for any $a(x),b(x)\in U$,
                  there exists a nonzero polynomial
                 $$f(x_{1},x_{2})\in\langle(x_{1}-\alpha x_{2})\  | \  \alpha\in\chi_{q}(\mathbb{Z})\rangle\subset\mathbb{C}[x_{1},x_{2}],$$
                 such that $f(x_{1},x_{2})a(x_{1})b(x_{2})=f(x_{1},x_{2})b(x_{2})a(x_{1}). $}
\end{definition}

As the first main result of this section we have:

\begin{thm}\label{phi-module-part1}
 Let $W$ be a restricted $D$-module of level $\ell$.
                  Then there exists a $\mathbb{Z}$-equivariant
                  $\phi$-coordinated quasi $V_{\widehat{\mathfrak{D}}}(\ell,0)$-module
                 structure $Y_{W}(\cdot,x)$ on $W$, which is uniquely
                 determined by
                 $$Y_{W}(d^{\alpha,r},x) = \hat{D}^{\alpha}(q^{r}x)
                   \  \  \  \mbox{ for }\alpha,r\in\mathbb{Z}.$$
\end{thm}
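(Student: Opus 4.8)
The plan is to realize the asserted structure through the conceptual construction of Theorem \ref{thm3.3} and then to identify the resulting vertex algebra with a homomorphic image of $V_{\widehat{\mathfrak{D}}}(\ell,0)$, in close analogy with the proof of Theorem \ref{quasi-main}. First I would note that since $W$ is a restricted $D$-module, $\hat{D}^{\alpha}(x)=x\tilde{D}^{\alpha}(x)\in\mathcal{E}(W)$, and hence $\hat{D}^{\alpha}(q^{r}x)\in\mathcal{E}(W)$ for all $\alpha,r\in\Z$. Set $U_{W}=\{\hat{D}^{\alpha}(q^{r}x)\mid\alpha,r\in\Z\}$. Feeding the multiplicative shifts $x\mapsto q^{r}x$ into the commutation relation (\ref{emodified}) of Lemma \ref{hatD-relations}, each bracket $[\hat{D}^{\alpha}(q^{r}x_{1}),\hat{D}^{\beta}(q^{s}x_{2})]$ becomes a finite sum of terms of the form $c(x_{2})\delta(q^{j}x_{2}/x_{1})$ together with a central term involving $x_{2}\frac{\partial}{\partial x_{2}}\delta(q^{j}x_{2}/x_{1}){\bf c}$, for finitely many integers $j$. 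Since $(x_{1}-q^{j}x_{2})\delta(q^{j}x_{2}/x_{1})=0$ and $(x_{1}-q^{j}x_{2})^{2}x_{2}\frac{\partial}{\partial x_{2}}\delta(q^{j}x_{2}/x_{1})=0$, multiplying by $f(x_{1},x_{2})=\prod_{j}(x_{1}-q^{j}x_{2})^{2}$, a polynomial lying in $\langle x_{1}-\alpha x_{2}\mid\alpha\in\chi_{q}(\Z)\rangle$, annihilates the bracket. Thus $U_{W}$ is $\chi_{q}(\Z)$-quasi local, which is exactly the locality required for the $\Z$-equivariant axiom.

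By Theorem \ref{thm3.3}, $(\langle U_{W}\rangle_{e},Y_{\mathcal{E}}^{e},\textbf{1}_{W})$ is a vertex algebra and $W$ is a $\phi$-coordinated quasi module for it with $Y_{W}(a(x),z)=a(z)$ for $a(x)\in\langle U_{W}\rangle_{e}$. Next I would construct a vertex algebra homomorphism $\rho:V_{\widehat{\mathfrak{D}}}(\ell,0)\to\langle U_{W}\rangle_{e}$ determined by $d^{\alpha,r}\mapsto\hat{D}^{\alpha}(q^{r}x)$. Because $V_{\widehat{\mathfrak{D}}}(\ell,0)$ is the universal level-$\ell$ affine vertex algebra attached to $(\mathfrak{D},\langle\cdot,\cdot\rangle)$ and is generated as a vertex algebra by $\mathfrak{D}=V_{\widehat{\mathfrak{D}}}(\ell,0)_{(1)}$, it suffices to check that the fields $\hat{D}^{\alpha}(q^{r}x)$ satisfy, under $Y_{\mathcal{E}}^{e}$, precisely the products coming from $\widehat{\mathfrak{D}}$: that $\hat{D}^{\alpha}(q^{r}x)^{e}_{0}\hat{D}^{\beta}(q^{s}x)$ reproduces the field attached to $[d^{\alpha,r},d^{\beta,s}]$ as in (\ref{eq:2.11}), that $\hat{D}^{\alpha}(q^{r}x)^{e}_{1}\hat{D}^{\beta}(q^{s}x)=\ell\langle d^{\alpha,r},d^{\beta,s}\rangle\textbf{1}_{W}$ with $\langle\cdot,\cdot\rangle$ as in (\ref{ebilinear-form}), and that $\hat{D}^{\alpha}(q^{r}x)^{e}_{n}\hat{D}^{\beta}(q^{s}x)=0$ for $n\ge 2$. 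These are read off from the iterate formula $Y_{\mathcal{E}}^{e}(a(x),z)b(x)=p(e^{z})^{-1}(p(x_{1}/x)a(x_{1})b(x))|_{x_{1}=xe^{z}}$ applied to the commutation relation (\ref{emodified}).

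With $\rho$ in hand, I would pull back the $\phi$-coordinated quasi module structure, setting $Y_{W}(v,x):=Y_{W}(\rho(v),x)$ for $v\in V_{\widehat{\mathfrak{D}}}(\ell,0)$; this makes $W$ a $\phi$-coordinated quasi $V_{\widehat{\mathfrak{D}}}(\ell,0)$-module with $Y_{W}(d^{\alpha,r},x)=\hat{D}^{\alpha}(q^{r}x)$, and the $\chi_{q}(\Z)$-quasi locality above supplies the polynomial with zeroes in $\chi_{q}(\Z)$ demanded by the equivariant definition. The equivariance $Y_{W}(\sigma_{m}v,x)=Y_{W}(v,q^{m}x)$ holds on generators since $Y_{W}(\sigma_{m}d^{\alpha,r},x)=\hat{D}^{\alpha}(q^{r+m}x)=\hat{D}^{\alpha}(q^{r}(q^{m}x))=Y_{W}(d^{\alpha,r},q^{m}x)$, and extends to all of $V_{\widehat{\mathfrak{D}}}(\ell,0)$ because the set of $v$ satisfying it is closed under the $Y_{\mathcal{E}}^{e}$-products and $\mathfrak{D}$ generates. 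Uniqueness is immediate from this same generating property.

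The main obstacle I expect is the homomorphism verification in the second paragraph: matching the $\phi$-coordinated products of the shifted fields $\hat{D}^{\alpha}(q^{r}x)$ with the affine structure of $\widehat{\mathfrak{D}}$. The delicate bookkeeping is the interaction between the multiplicative shifts $q^{r},q^{s}$ and the exponential substitution $x_{1}=xe^{z}$ defining $Y_{\mathcal{E}}^{e}$, and in particular checking that the $x_{2}\frac{\partial}{\partial x_{2}}$ appearing in the central term of (\ref{emodified}) yields the $n=1$ product with coefficient $\ell\langle d^{\alpha,r},d^{\beta,s}\rangle$ rather than contributing to $n=0$. This step may alternatively be streamlined by invoking a general equivalence theorem for $\phi$-coordinated quasi modules over affine vertex algebras from \cite{Li4}, playing here the role that \cite{Li2} (Theorem 4.9) played in the proof of Theorem \ref{quasi-main}.
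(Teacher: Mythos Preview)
Your proposal is correct and follows essentially the same route as the paper: build $\langle U_{W}\rangle_{e}$ via Theorem~\ref{thm3.3}, construct a vertex-algebra homomorphism from $V_{\widehat{\mathfrak{D}}}(\ell,0)$ by verifying the $Y_{\mathcal{E}}^{e}$-products on generators, pull back the module structure, and check equivariance on generators. The paper dispatches your ``main obstacle'' by citing Lemma~4.13/Proposition~4.14 of \cite{Li5} to read off the products $\hat{D}^{\alpha,r}(x)^{e}_{n}\hat{D}^{\beta,s}(x)$ directly from the commutator (\ref{eq:3.6}), and invokes Lemma~4.21 of \cite{Li5} (rather than an ad hoc closure argument) to propagate equivariance and $\chi_{q}(\Z)$-quasi locality from $\mathfrak{D}$ to all of $V_{\widehat{\mathfrak{D}}}(\ell,0)$.
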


 \begin{proof} Since ${\mathfrak{D}}$ generates $V_{\widehat{\mathfrak{D}}}(\ell,0)$ as a vertex algebra,
              the uniqueness is clear. We now establish the existence.
              Set
             $ U_{W}=\{1_{W}\}\cup\{\hat{D}^{\alpha}(q^{r}x)
                   \  |  \  \alpha,r\in\mathbb{Z}\} \subset\mathcal{E}(W)$.
                 For $ \alpha,r\in\mathbb{Z}$,    set $\hat{D}^{\alpha,r}(x) = \hat{D}^{\alpha}(q^{r}x) $.
                 We have
           \begin{eqnarray*}
                \hat{D}^{-\alpha,r}(x) =x\tilde{D}^{-\alpha}(q^{r}x)=-x\tilde{D}^{\alpha}(q^{r}x)=-\hat{D}^{\alpha,r}(x).
           \end{eqnarray*}
                  Let $ \alpha,r,\beta,s\in\mathbb{Z}$. Using (\ref{emodified}) we get
             \begin{eqnarray}
          && [\hat{D}^{\alpha,r}(x_{1}),\hat{D}^{\beta,s}(x_{2})] \nonumber\\
          &=& [\hat{D}^{\alpha}(q^{r}x_{1}),\hat{D}^{\beta}(q^{s}x_{2})]         \nonumber\\
        & =& \hat{D}^{\alpha+\beta,-\alpha+s}(x_{2})\delta\left(\frac{q^{-\alpha-\beta-r+s}x_{2}}{x_{1}}\right)
              -\hat{D}^{\alpha+\beta,\alpha+s}(x_{2})\delta\left(\frac{q^{\alpha+\beta-r+s}x_{2}}{x_{1}}\right)
                                                              \nonumber\\
         &&{} -\hat{D}^{\alpha-\beta,-\alpha+s}(x_{2})\delta\left(\frac{q^{\beta-\alpha-r+s}x_{2}}{x_{1}}\right)
         +\hat{D}^{\alpha-\beta,\alpha+s}(x_{2})\delta\left(\frac{q^{\alpha-\beta-r+s}x_{2}}{x_{1}}\right)
                                                                   \nonumber\\
       &&{} +\ell (\delta_{\alpha-\beta,0}-\delta_{\alpha+\beta,0})
          \left(x_{2}\frac{\partial}{\partial x_{2}}\right)\delta\left(\frac{q^{s-r}x_{2}}{x_{1}}\right).\label{eq:3.6}
\end{eqnarray}
From this it follows that $U_{W}$ is a quasi local subset of $\mathcal{E}(W)$.
     By Theorem \ref{thm3.3}, $U_{W}$ generates a vertex algebra $\langle U_{W}\rangle_{e}$
     under the vertex operator operation $Y_{\mathcal{E}}^{e}$ with $W$ as a $\phi$-coordinated quasi module, where
       $Y_{W}(a(x),z) = a(z)$ for $a(x)\in \langle U_{W}\rangle_{e}.$

With (\ref{eq:3.6}), by using the Lemma 4.13 or Proposition 4.14 of \cite{Li5}, we have

 $$\hat{D}^{\alpha,r}(x)_{n}^{e}\hat{D}^{\beta,s}(x) = 0   \;\;\mbox{for}\; n\geq 2,$$
 $$\hat{D}^{\alpha,r}(x)_{1}^{e}\hat{D}^{\beta,s}(x) =\ell (\delta_{\alpha-\beta,0}-\delta_{\alpha+\beta,0})\delta_{s-r,0}1_{W},$$
  \begin{eqnarray*}
 \qquad \hat{D}^{\alpha,r}(x)_{0}^{e}\hat{D}^{\beta,s}(x)&
= &\delta_{\alpha+\beta,s-r} \hat{D}^{\alpha+\beta,-\alpha+s}(x)
   -\delta_{\alpha+\beta,r-s}\hat{D}^{\alpha+\beta,\alpha+s}(x)
                                  \nonumber\\
  &&{} -\delta_{\beta-\alpha,r-s}\hat{D}^{\alpha-\beta,-\alpha+s}(x)
   +\delta_{\alpha-\beta,r-s}\hat{D}^{\alpha-\beta,\alpha+s}(x).
\end{eqnarray*}
(Note that as $q$ is not a root of unity, for $\gamma\in \Z$, $q^{\gamma}=1$ if and only if $\gamma=0$.)
Then by Borcherds' commutator formula we have
\begin{eqnarray}
&&[Y_{\mathcal{E}}^{e}(\hat{D}^{\alpha,r}(x),x_{1}),Y_{\mathcal{E}}^{e}(\hat{D}^{\beta,s}(x),x_{2})] \nonumber\\
&=&\sum_{n\geq 0}Y_{\mathcal{E}}^{e}(\hat{D}^{\alpha,r}(x)_{n}^{e}\hat{D}^{\beta,s}(x),x_{2})\frac{1}{n!}(\frac{\partial}{\partial x_{2}})^{n}
                           x_{1}^{-1}\delta\left(\frac{x_{2}}{x_{1}}\right)  \nonumber\\
&=&\left( \delta_{\alpha+\beta,s-r}Y_{\mathcal{E}}^{e}(\hat{D}^{\alpha+\beta,-\alpha+s}(x),x_{2})
   -\delta_{\alpha+\beta,r-s}Y_{\mathcal{E}}^{e}(\hat{D}^{\alpha+\beta,\alpha+s}(x),x_{2})\right)x_{1}^{-1}\delta\left(\frac{x_{2}}{x_{1}}\right)
                         \nonumber\\
   &&-\left(\delta_{\alpha-\beta,s-r}Y_{\mathcal{E}}^{e}(\hat{D}^{\alpha-\beta,-\alpha+s}(x),x_{2})
   -\delta_{\alpha-\beta,r-s}Y_{\mathcal{E}}^{e}(\hat{D}^{\alpha-\beta,\alpha+s}(x),x_{2})\right)x_{1}^{-1}\delta\left(\frac{x_{2}}{x_{1}}\right)  \nonumber\\
   &&+\delta_{r,s}(\delta_{\alpha-\beta,0}-\delta_{\alpha+\beta,0})
       \frac{\partial}{\partial x_{2}}x_{1}^{-1}\delta\left(\frac{x_{2}}{x_{1}}\right)\ell\cdot 1_{W}.\label{eq:3.7}
\end{eqnarray}
We also have invariance property
$$Y_{\mathcal{E}}^{e}(\hat{D}^{-\alpha,r}(x),x_{1})=Y_{\mathcal{E}}^{e}(-\hat{D}^{\alpha,r}(x),x_{1})
=-Y_{\mathcal{E}}^{e}(\hat{D}^{\alpha,r}(x),x_{1})$$
for $\alpha,r\in \Z$.
 It follows that $\langle U_{W}\rangle_{e}$ is a $\widehat{\mathfrak{D}}$-module of level $\ell$
 with $d^{\alpha,r}(z)$ acting as $Y_{\mathcal{E}}^{e}(\hat{D}^{\alpha,r}(x),z)$
 for $\alpha,r\in\mathbb{Z}.$
 Furthermore, we have
 $\hat{D}^{\alpha,r}(x)_{n}^{e}\textbf{1}_{W}=0$
 for $\alpha,r\in\mathbb{Z},n\in\mathbb{N}.$
 From the construction of $V_{\widehat{\mathfrak{D}}}(\ell,0)$, we see that
 there exists a $\widehat{\mathfrak{D}}$-module homomorphism $\rho$ from $V_{\widehat{\mathfrak{D}}}(\ell,0)$ to $\langle U_{W}\rangle_{e}$,
 sending $\textbf{1}$ to $\textbf{1}_{W}$.
 For $\alpha,r\in\mathbb{Z}$, we have
 $$\rho(d^{\alpha,r})=\rho(d^{\alpha,r}_{-1}\textbf{1})
 =\hat{D}^{\alpha,r}(x)_{-1}\textbf{1}_{W}=\hat{D}^{\alpha,r}(x)\in \langle U_{W}\rangle_{e},$$
 $$\rho(d^{\alpha,r}_{n}v)=\rho(d^{\alpha,r}(n)v)=\hat{D}^{\alpha,r}(x)_{n}\rho(v)=\rho(d^{\alpha,r})_{n}\rho(v)$$
 for $n\in \Z,\ v\in V_{\widehat{\mathfrak{D}}}(\ell,0)$.
Since ${\mathfrak{D}}$ generates $V_{\widehat{\mathfrak{D}}}(\ell,0)$ as a vertex algebra,
it follows that $\rho$ is a homomorphism of vertex algebras.
Consequently, $W$ becomes a
 $\phi$-coordinated quasi $V_{\widehat{\mathfrak{D}}}(\ell,0)$-module
 with $Y_{W}(d^{\alpha,r},x) = \hat{D}^{\alpha,r}(x)$
 for $\alpha,r\in\mathbb{Z}.$

 Furthermore, we have
 $$Y_{W}(\sigma_{m}(d^{\alpha,r}),x)=Y_{W}(d^{\alpha,m+r},x)=\hat{D}^{\alpha,m+r}(x)
 =\hat{D}^{\alpha,r}(q^{m}x)=Y_{W}(d^{\alpha,r},q^{m}x),$$
 and it is clear that $\{Y_{W}(v,x)\  | \  v\in {\mathfrak{D}}\}$ is $\chi_{q}(\mathbb{Z})$-quasi local.
 Then it follows from  \cite{Li5} (Lemma 4.21) that $(W,Y_{W})$ is a $\mathbb{Z}$-equivariant
 $\phi$-coordinated quasi $V_{\widehat{\mathfrak{D}}}(\ell,0)$-module.
 \end{proof}

 On the other hand,  we have:

 \begin{thm}\label{phi-module-D}
 Let $W$ be a $\mathbb{Z}$-equivariant
                  $\phi$-coordinated quasi $V_{\widehat{\mathfrak{D}}}(\ell,0)$-module.
                     Then  $W$ is a restricted $D$-module of level $\ell$
                  with $\hat{D}^{\alpha,r}(x)=Y_{W}(d^{\alpha,r},x)$
                  for $\alpha,r\in\mathbb{Z}.$
\end{thm}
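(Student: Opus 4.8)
The plan is to reverse the construction of Theorem \ref{phi-module-part1}: starting from the given module data, I read off generating functions on $W$ and verify the defining relations of $D$ in the reformulation of Lemma \ref{hatD-relations}. I set $\hat{D}^{\alpha,r}(x):=Y_W(d^{\alpha,r},x)$ for $\alpha,r\in\Z$ and let the central element $\mathbf{c}$ of $D$ act as $\ell\cdot 1_W$. Two of the required relations are immediate. The relation $\hat{D}^{-\alpha,r}(x)=-\hat{D}^{\alpha,r}(x)$ follows from $d^{-\alpha,r}=-d^{\alpha,r}$ and linearity of $Y_W$. The $\mathbb{Z}$-equivariance $Y_W(\sigma_m v,x)=Y_W(v,q^m x)$ together with $\sigma_m(d^{\alpha,r})=d^{\alpha,r+m}$ gives $\hat{D}^{\alpha,r}(x)=\hat{D}^{\alpha,0}(q^r x)$, so writing $\hat{D}^\alpha(x):=\hat{D}^{\alpha,0}(x)$ recovers the generating functions in the statement. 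Restrictedness is automatic: $Y_W(\cdot,x)$ maps into $\mathrm{Hom}(W,W((x)))\subset\mathcal{E}(W)$, so each $\hat{D}^\alpha(x)\in\mathcal{E}(W)$, and since $\mathbf{c}=\ell$ one recovers $D^\alpha(x)=x^{-1}\bigl(\hat{D}^\alpha(x)+\tfrac{\ell}{q^{-\alpha}-q^\alpha}\bigr)\in\mathcal{E}(W)$, i.e. $D^\alpha(n)w=0$ for $n$ large.

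The substance of the proof is the commutation relation \eqref{emodified}. For this I would invoke the commutator formula for $\mathbb{Z}$-equivariant $\phi$-coordinated quasi modules from \cite{Li5}---the same apparatus used in Theorem \ref{phi-module-part1}---which expresses $[Y_W(u,x_1),Y_W(v,x_2)]$ through the products $(\sigma_m u)_i v$ and $q$-shifted delta functions $\delta(q^k x_2/x_1)$. The relevant products in $V_{\widehat{\mathfrak{D}}}(\ell,0)$ are read off from its affine construction recalled in Section 2: $(d^{\alpha,r})_i(d^{\beta,s})=0$ for $i\geq2$, $(d^{\alpha,r})_1(d^{\beta,s})=\ell\langle d^{\alpha,r},d^{\beta,s}\rangle\mathbf{1}=\ell\,\delta_{r,s}(\delta_{\alpha-\beta,0}-\delta_{\alpha+\beta,0})\mathbf{1}$, and $(d^{\alpha,r})_0(d^{\beta,s})=[d^{\alpha,r},d^{\beta,s}]$ as given by \eqref{eq:2.11}. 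Substituting these, the $i=0$ term reproduces the four $\hat{D}^{\alpha\pm\beta}$ summands together with the $q$-shifted deltas of \eqref{eq:3.6}, while the $i=1$ term produces the central contribution; setting $r=s=0$ then yields \eqref{emodified}.

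The only genuinely delicate point is the central term. Because $\phi(x,z)=xe^z$ enters through the substitution $x_1\mapsto x_2 e^z$, the ordinary derivative $\partial/\partial x_2$ of the classical commutator formula becomes $x_2\,\partial/\partial x_2$; together with the coefficient $\ell\,\delta_{r,s}(\delta_{\alpha-\beta,0}-\delta_{\alpha+\beta,0})$ (whose $\delta_{r,s}$ collapses the $q^{s-r}$-shift to $1$) this produces exactly $(\delta_{\alpha-\beta,0}-\delta_{\alpha+\beta,0})x_2\frac{\partial}{\partial x_2}\delta(x_2/x_1)\mathbf{c}$ with $\mathbf{c}$ acting as $\ell$. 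I expect the main obstacle to be the delta-function bookkeeping: verifying that the group sum over $\sigma_m$ in the equivariant commutator formula reproduces precisely the four shifts $q^{\pm\alpha\pm\beta}$ of \eqref{emodified} and no spurious terms. This uses crucially that $q$ is not a root of unity, so that $q^\gamma=1$ forces $\gamma=0$ and the Kronecker deltas in \eqref{eq:2.11} single out the correct group elements. Once \eqref{emodified} and the skew-symmetry are established, Lemma \ref{hatD-relations} shows that the $\hat{D}^\alpha(x)$ endow $W$ with a restricted $D$-module structure of level $\ell$, and the equivariant shift relation guarantees consistency with $\hat{D}^{\alpha,r}(x)=Y_W(d^{\alpha,r},x)$, completing the proof.
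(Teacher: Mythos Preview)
Your proposal is correct and follows essentially the same approach as the paper: both apply the equivariant commutator formula from \cite{Li5} (the paper cites Theorem~4.19 there) together with the computation of $(\sigma_m d^{\alpha,r})_n d^{\beta,s}$ in $V_{\widehat{\mathfrak{D}}}(\ell,0)$ to recover relation~\eqref{eq:3.6}, and then invoke Lemma~\ref{hatD-relations} to conclude. One small clarification: the role of ``$q$ is not a root of unity'' is that it makes $\chi_q$ injective, which is the hypothesis needed to invoke the commutator formula of \cite{Li5}; the collapse of the sum over $m$ to four terms then comes directly from the Kronecker deltas in~\eqref{eq:2.11} and~\eqref{ebilinear-form}.
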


 \begin{proof}               For $\alpha,r,m\in \Z$, we have
              \begin{eqnarray*}
              &&Y_{W}(d^{-\alpha,r},x)=Y_{W}(-d^{\alpha,r},x)=-Y_{W}(d^{\alpha,r},x),\nonumber\\
              &&Y_{W}(d^{\alpha,r+m},x)=Y_{W}(\sigma_{m}(d^{\alpha,r}),x)=Y_{W}(d^{\alpha,r},q^{m}x).
              \end{eqnarray*}
              Let $\alpha,r,\beta,s\in\mathbb{Z}.$
   Note that for $m\in\mathbb{Z}, n \geq 0 ,$
              \begin{eqnarray*}
              (\sigma_{m}d^{\alpha,r})_{n}d^{\beta,s}=d^{\alpha,m+r}_{n}d^{\beta,s}
              &=&\left([d^{\alpha,m+r} , d^{\beta,s}]\otimes t^{n-1}\right)\textbf{1}
                  +\langle d^{\alpha,m+r},d^{\beta,s}\rangle \ell \delta_{n,1}\textbf{1}\nonumber\\
                  &=&\delta_{n,0}[d^{\alpha,m+r} , d^{\beta,s}]+\delta_{n,1}\langle d^{\alpha,m+r},d^{\beta,s}\rangle \ell \textbf{1}.
                  \end{eqnarray*}
              Noticing that $\chi_{q}$ is injective, from \cite{Li5} (Theorem 4.19),  we have
               \begin{align*}
        &[Y_{W}(d^{\alpha,r},x_{1}),Y_{W}(d^{\beta,s},x_{2})] \\
        =&  \mbox{Res}_{x_{0}}\sum_{m\in\mathbb{Z}}Y_{W}\left(Y(\sigma_{m}(d^{\alpha,r}),x_{0})d^{\beta,s},x_{2}\right)
                e^{x_{0}(x_{2}\frac{\partial}{\partial x_{2}})}\delta\left(\frac{\chi_{q}(\sigma_{m})x_{2}}{x_{1}}\right) \\
       =& \sum_{m\in\mathbb{Z}}\sum_{n\ge 0}Y_{W}\left(\sigma_{m}(d^{\alpha,r})_{n}d^{\beta,s},x_{2}\right)
               \frac{1}{n!} \left(x_{2}\frac{\partial}{\partial x_{2}}\right)^{n}\delta\left(\frac{q^{m}x_{2}}{x_{1}}\right) \\
      =& \sum_{m\in\mathbb{Z}}Y_{W}\left([d^{\alpha,r+m},d^{\beta,s}],x_{2}\right)\delta\left(\frac{q^{m}x_{2}}{x_{1}}\right)
                +\ell\langle d^{\alpha,m+r},d^{\beta,s}\rangle \left(x_{2}\frac{\partial}{\partial x_{2}}\right)\delta\left(\frac{q^{m}x_{2}}{x_{1}}\right) \\
       =& Y_{W}(d^{\alpha+\beta,-\alpha+s},x_{2})\delta\left(\frac{q^{-\alpha-\beta-r+s}x_{2}}{x_{1}}\right)
              -Y_{W}(d^{\alpha+\beta,\alpha+s},x_{2})\delta\left(\frac{q^{\alpha+\beta-r+s}x_{2}}{x_{1}}\right)\\
         &- Y_{W}(d^{\alpha-\beta,-\alpha+s},x_{2})\delta\left(\frac{q^{\beta-\alpha-r+s}x_{2}}{x_{1}}\right)
         +Y_{W}(d^{\alpha-\beta,\alpha+s},x_{2})\delta\left(\frac{q^{\alpha-\beta-r+s}x_{2}}{x_{1}}\right)\\
         &+\ell (\delta_{\alpha-\beta,0}-\delta_{\alpha+\beta,0})\left(x_{2}\frac{\partial}{\partial x_{2}}\right)\delta\left(\frac{q^{s-r}x_{2}}{x_{1}}\right).
         \end{align*}
Then it follows from Lemma \ref{hatD-relations} (cf. Proposition \ref{psecond-def-D})
 that $W$ is a $D$-module of level $\ell$ with $D^{\alpha,r}(x)=Y_{W}(d^{\alpha,r},x)$
                  for $\alpha,r\in\mathbb{Z}.$ Since $W$ is a $\phi$-coordinated quasi $V_{\widehat{\mathfrak{D}}}(\ell,0)$-module,
 by definition $Y_{W}(d^{\alpha,k},x)\in\mathcal{E}(W)$ for $\alpha,r\in \Z$. Therefore,
  $W$ is a restricted $D$-module of level $\ell$.
 \end{proof}


\begin{thebibliography}{G-K-K}

\bibitem[BBS]{BBS}
S. Berman, Y. Billig and J. Szmigielski, Vertex operator algebras and the representation theory of toroidal algebras,
{\em Contemporary Math.} {\bf 297}, Amer. Math. Soc., Providence, 2002, 1-26.

\bibitem[FZ]{FZ}
 I. B. Frenkel and Y. Zhu, Vertex operator algebras associated to representations
of affine and Virasoro algebras, {\em Duke Math. J.}  {\bf 66} (1992), 123-168.

\bibitem[G-K-K]{G-K-K}
M. Golenishcheva-kutuzova and V. Kac, $\Gamma$-conformal algebras,
 {\em J. Math. Phys.} {\bf 39} (1998), 2290-2305.

 \bibitem[JL]{JL}
 C. Jiang and H.-S. Li, Associating quantum vertex algebras to Lie algebra $\mathfrak{gl}_{\infty}$,
 {\em J. Algebra} (2014); arXiv:1301.5833v1.

\bibitem[K]{K} V. G. Kac,
\emph{Infinite-dimensional Lie Algebras}, 3rd ed., Cambridge
University Press, Cambridge, 1990.

\bibitem[KL]{KL}
M. Karel and H.-S. Li, Some quantum vertex algebras of Zamolodchikov-Faddeev
type, {\em Commun. Contemp. Math.} {\bf 11} (2009), 829-863.

\bibitem[L1]{Li1}
H.-S. Li, A new construction of vertex algebras and quasi modules
for vertex algebras, {\em Advances in Math.} {\bf 202} (2006), 232-286.

\bibitem[L2]{Li2}
H.-S. Li, On certain generalizations of twisted affine Lie algebras and quasimodules
for $\Gamma$-vertex algebras, {\em J. Pure and Appl. Algebra } {\bf 209} (2007), 853-871.

\bibitem[L3]{Li3} H.-S. Li, Associating quantum vertex algebras to deformed Heisenberg
Lie algebras, {\em Frontiers of Mathematics in China } {\bf 6} (2011), 707-730.

\bibitem[L4]{Li4} H.-S. Li, $\phi$-coordinated quasi-modules for quantum vertex algebras,
{\em Commun. Math. Phys.} {\bf 308} (2011), 703-741.

\bibitem[L5]{Li5} H.-S. Li, $G$-equivariant $\phi$-coordinated quasi-modules for quantum vertex algebras,
 {\em J. Math. Phys.} {\bf 54} (2013), 1-26.

\bibitem[L6]{Li6} H.-S. Li, Nonlocal vertex algebras generated by formal vertex operators,
{\em Selecta Mathematica, New Series} {\bf 11} (2005), 349-397.

\bibitem[L7]{Li7} H.-S. Li, Local systems of vertex operators, vertex subalgebras and modules,
{\em J. Pure Appl. Algebra} {\bf 109} (1996), 143-195.

\bibitem[LL] {LL} J. Lepowsky and H.-S. Li, Introduction to Vertex Operator Algebras
and Their Representations, Progress in Math., Vol. 227,
Birkh\"auser, Boston, 2004.

\bibitem[LTW] {LTW} H.-S. Li, S. Tan and Q. Wang, Toroidal vertex algebras and their modules,
{\em J. Algebra}  {\bf 365} (2012) 50-82.

\bibitem[N]{N} A. Nigro, A q-Virasoro algebra at roots of unity, Free Fermions and
Temperley Lieb Hamiltonians, arXiv:1211.1067v1[math-ph].

\end{thebibliography}
\end{document}